\documentclass[11pt,a4paper]{article}
\usepackage[left=2cm, right=2cm, top=2cm]{geometry}

\usepackage{float}
\usepackage{graphicx}
\usepackage{amsmath}
\usepackage{mathtools}
\usepackage{amssymb}
\usepackage{bm}
\usepackage{framed}
\usepackage{subfig}
\usepackage{hyperref}
\usepackage{nomencl}
\usepackage{color}
\usepackage{textcomp}
\usepackage{graphics}
\usepackage{epsfig}
\usepackage{bm}
\usepackage{epstopdf}
\usepackage{amsthm}
\usepackage{float}
\usepackage{latexsym,amsfonts}
\usepackage{url}
\usepackage{longtable}
\usepackage[figuresright]{rotating}
\usepackage{listings}
\usepackage{algpseudocode}
\usepackage{footnote}
\usepackage[linesnumbered,ruled,vlined]{algorithm2e}

\usepackage{mathrsfs}

\usepackage{xcolor}

\usepackage{makeidx}
\usepackage{authblk}

\makeindex

\SetKwInput{KwInput}{Input}                
\SetKwInput{KwOutput}{Output}              

\newtheorem{theorem}{Theorem}
\newtheorem{Remark}{Remark}
\newtheorem{Assumption}{Assumption}

\newcommand\myeq{\mathrel{\stackrel{\makebox[0pt]{\mbox{\normalfont\tiny def}}}{=}}}
\newcommand\myiid{\mathrel{\stackrel{\makebox[0pt]{\mbox{\normalfont\tiny i.i.d.}}}{\sim}}}

\definecolor{tocorrect}{rgb}{0.97, 0.04, 0.56}

\newcounter{lnote}

\usepackage{relsize}
\newcommand{\Epsilon}{{\mathlarger{\bm{\epsilon}}}}
\DeclareMathOperator*{\argmin}{arg\,min}

\DeclarePairedDelimiterX{\infdivx}[2]{(}{)}{%
  #1\;\delimsize\|\;#2%
}
\newcommand{\infdiv}{D_{\text{KL}}\infdivx}

\begin{document}

\title{Multilevel Double Loop Monte Carlo and Stochastic Collocation Methods with Importance Sampling for Bayesian Optimal Experimental Design}
\author[1]{Joakim Beck~\thanks{joakim.beck@kaust.edu.sa}}

\author[2]{Ben Mansour Dia~\thanks{mansourben2002@yahoo.fr}}

\author[1]{Luis F.R. Espath~\thanks{espath@gmail.com}}

\author[1,3]{Ra\'ul Tempone~\thanks{raul.tempone@kaust.edu.sa}}

\affil[1]{King Abdullah University of Science and Technology (KAUST), Computer, Electrical and Mathematical Science and Engineering Division (CEMSE), Thuwal, 23955-6900, Saudi Arabia}
\affil[2]{Center for Integrative Petroleum Research (CIPR), College of Petroleum Engineering and Geosciences, King Fahd  University of Petroleum and Minerals (KFUPM), Dhahran 31261, Saudi Arabia}
\affil[3]{Alexander von Humboldt Professor in Mathematics of Uncertainty Quantification, RWTH Aachen University, 52062 Aachen, Germany}

\maketitle

\begin{abstract}
An optimal experimental set-up maximizes the value of data for statistical inferences and predictions. The efficiency of strategies for finding optimal experimental set-ups is particularly important for experiments that are time-consuming or expensive to perform. For instance, in the situation when the experiments are modeled by Partial Differential Equations (PDEs), multilevel methods have been proven to dramatically reduce the computational complexity of their single-level counterparts when estimating expected values. For a setting where PDEs can model experiments, we propose two multilevel methods for estimating a popular design criterion known as the expected information gain in simulation-based Bayesian optimal experimental design. The expected information gain criterion is of a nested expectation form, and only a handful of multilevel methods have been proposed for problems of such form. We propose a Multilevel Double Loop Monte Carlo (MLDLMC), which is a multilevel strategy with Double Loop Monte Carlo (DLMC), and a Multilevel Double Loop Stochastic Collocation (MLDLSC), which performs a high-dimensional integration by deterministic quadrature on sparse grids. For both methods, the Laplace approximation is used for importance sampling that significantly reduces the computational work of estimating inner expectations. The optimal values of the method parameters are determined by minimizing the average computational work, subject to satisfying the desired error tolerance. The computational efficiencies of the methods are demonstrated by estimating the expected information gain for Bayesian inference of the fiber orientation in composite laminate materials from an electrical impedance tomography experiment. MLDLSC performs better than MLDLMC when the regularity of the quantity of interest, with respect to the additive noise and the unknown parameters, can be exploited.\\[5pt]

\noindent \emph{Keywords:} Electrical impedance tomography, Expected information gain, Importance sampling, Multilevel, Stochastic collocation\\[2pt]

\noindent AMS 2010 subject classification: 62K05, 65N21, 65C60, 65C05
\end{abstract}

\section{Introduction}\label{sec:introduction}
Experiments are meant to provide meaningful information about selected quantities of interest. An experiment may assume different set-ups in a broad sense, and can be time consuming or expensive to perform. Therefore, the design of experiments plays an important role in improving the information gain of the experiment; a comprehensive review of utility functions and their computational algorithms for Bayesian optimal experimental design is available in \cite{ryan2016}. Bayesian optimal experimental design involves the task of designing experiments with the objective of maximizing the value of data for solving inverse problems, in a Bayesian sense. Recent work on Bayesian alphabetical optimal experimental design includes \cite{alex2016,AS2018,Attia2018,CASG2017,TLBH2013,WWJ2018}. A widely popular Bayesian information-theoretic utility function for nonlinear models known as Expected Information Gain (EIG) was introduced in 1956 by Lindley \cite{key13} for measuring the amount of information provided by an experiment. Lindley defined the EIG utility function as the average, with respect to our prior knowledge expressed through a prior probability density function (pdf), of the relative information entropy, based on the Shannon information entropy \cite{key12}, of the prior pdf with respect to the posterior pdf. The prior and posterior pdfs express our knowledge before and after performing the experiment, respectively. The relative information entropy is equivalent to the Kullback-Leibler divergence \cite{kullback1959,kullback1951} of the prior pdf from the posterior pdf. The EIG criterion is computationally challenging to compute since it is a \emph{nested} expectation of the form $\mathbb{E}[f_1(\bm{X}_1)/\mathbb{E}[f_2(\bm{X}_1,\bm{X}_2) \vert \bm{X}_1]]$, where $f_i$ are real-valued functions and $\bm{X}_i$ are random variables. As an alternative to direct estimation of EIG, a lower bound estimate can be used as a design criterion, as proposed by Tsilifis et al. \cite{Tsilifis2017}, it provides an approximate solution to the original design problem and requires less computational work to solve. In the present work, the goal is to develop computationally efficient estimators of the EIG criterion that should satisfy a specified accuracy requirement. The subsequent, albeit important, task of efficiently maximizing the expected information gain in order to find the \emph{most informative} experimental set-up is beyond the scope of our study. Efficient optimization strategies on continuous design spaces include stochastic gradient methods (e.g., \cite{Carlon2018,HM2014,huan}) and the approximate coordinate exchange algorithm (e.g., \cite{MN1995,OW2017}).

Ryan \cite{Ryan2003} applied a Double Loop Monte Carlo (DLMC) estimator for the EIG criterion, which entails applying Monte Carlo (MC) sampling for estimating the outer expectation and, for each outer sample, MC sampling is performed to estimate the inner expectation. The DLMC estimator is highly computationally taxing and yields a bias due to the inner averaging. The computational work required by DLMC can be reduced by approximating the inner expectation by the Laplace method \cite{LBP2009,key15,key52} instead of MC sampling, but comes at the price of a bias that is challenging to control. An alternative to the Laplace method is the Laplace-based importance sampling presented in \cite{RDP2015}, which dramatically improves the sampling efficiency for the inner MC sampling of DLMC. In \cite{BDELT2018}, an optimization strategy for estimating the EIG criterion with error control was derived for DLMC Laplace-based Importance Sampling (DLMCIS) that for a desired error tolerance minimizes the computational work, and it was shown that the estimator requires vastly fewer inner-loop samples compared to the standard DLMC, typically by orders of magnitude, for the same accuracy. Huan et al. \cite{huan} proposed an approximation of EIG by standard DLMC by first replacing the underlying model by a polynomial approximation, using polynomial chaos expansions with pseudo-spectral projection, of the model's input-output mapping. This approach requires substantially fewer model evaluations than DLMC, but the approximation error is not fully estimated nor controlled.

In this work, we improve upon DLMCIS \cite{BDELT2018} by employing multilevel techniques \cite{G2015}. Multilevel Monte Carlo (MLMC) estimators (e.g., \cite{CST2013,G2015,ali2016}) have been widely used for estimating expectations of quantities of interest that depends on the solution of Partial Differential Equations (PDEs) as it accelerates the computations of expectations by using control variates, which are based on successive differences of a sequence of increasingly-refined meshes, to reduce the variance of MC estimators. The EIG criterion is of a nested expectation form and only a few multilevel methods have been proposed for quantities of interest of such form; see, e.g., Section 9 of \cite{G2015} for nested MLMC simulation and \cite{giles2019multilevel} for nested MLMC for efficient risk estimation.
We propose two multilevel estimators for EIG: Multilevel Double Loop Monte Carlo (MLDLMC) and Multilevel Double Loop Stochastic Collocation (MLDLSC). MLDLMC uses a multilevel strategy with DLMCIS and the \emph{level} defines the resolution of the numerical discretization of the underlying model and the number of inner-loop samples. As for the work complexity of MLDLMC, we provide an upper bound of the total work for a prescribed desired accuracy. A recent work by Godas et al. \cite{goda2018} proposed, independently of this work, an MLMC estimator using the Laplace-based importance sampling for the EIG criterion. They propose an MLMC estimator with an antithetic technique and where the number of inner-loop samples defines the level. The problem setting between this work and that of \cite{goda2018} is significantly different as we consider the case where the solution to the underlying model needs to be approximated instead of being known exactly. Hence, our multilevel methods have a multilevel hierarchy that controls the number of inner samples as well as the mesh discretization.
MLDLSC, the other method proposed, uses Multi-Index Stochastic Collocation (MISC) \cite{ali2016b} for the outer expectation of EIG and full-tensor stochastic collocation, see, e.g., \cite{BNT2010,BTNT2012}, for approximating the inner expectations. MLDLSC provides an error control of the quantity of interest as well as reducing the computational work for a given accuracy requirement, compared to \cite{huan}, as it combines model evaluations at different mesh resolutions, with only a small number of those evaluations are on fine meshes, which are those that are more expensive to evaluate. MLDLSC uses the Laplace-based importance sampling \cite{BDELT2018} for the inner expectation.

To assess the computational efficiency of our proposed methods, we consider an electrical impedance tomography (EIT) problem in which we infer the angle of fibers in a composite laminate material. The composite laminate has four plies, and five electrodes are deployed on each side of the plate. Each ply of the composite laminate is an orthotropic layer with its fibers uniformly distributed along one predetermined direction. The electrodes inject electrical current and measure the electrical potential, which in turn is used to infer the material properties. We adopt the complete electrode model (CEM) \cite{somersalo} to simulate EIT experiments for composite laminate materials. The experiment for numerical demonstration consists of a composite laminate with four plies, where five electrodes are deployed on each side of the plate to inject current and measure the potential. The goal of the experiment is to gain information about the fiber orientations in the composite laminate material from the measured potential. MLDLMC and MLDLSC are applied to efficiently estimating the EIG for a given experiment set-up.

The outline of the paper is as follows. In Section \ref{sec:problemsetting}, we present the EIG criterion and the underlying data model assumption. In Section \ref{sec:assumptions}, we detail the numerical discretization approximation of the EIG. The Laplace-based importance sampling that is used in both of the proposed multilevel methods is given in Section \ref{sec:laplace}. The MLDLMC estimator is presented in Section \ref{sec:meif}. Then, in Section \ref{sec:biasvarwork}, the bias, variance, and the work of DLMCIS is analyzed, and then used in Section \ref{sec:mldlmcparam} when determining suitable values for method parameters in the MLDLMC estimator by minimizing the computational work for a specified accuracy requirement. In Section \ref{sec:compdiscuss}, the work complexity of MLDLMC, with respect to a desired error tolerance, is compared to that of the standard MLMC. The proposed MLDLSC is described in \ref{sec:mldlscsec}. Finally, in Section \ref{sec:eitnum}, we provide a numerical comparison of the computational performances of the two methods, MLDLMC and MLDLSC, for estimating the expected information gain for an EIT experiment.

\section{Problem setting}
\subsection{Bayesian optimal experimental design}\label{sec:problemsetting}
In this work, we consider the data model
\begin{equation}
\label{eq_datamodel}
\bm{Y}(\bm{\theta},\bm{\xi}) = \bm{G}(\bm{\theta}_t,\bm{\xi}) + \Epsilon,
\end{equation}
where $\bm{Y} \myeq \left(\bm{y}_1, \ldots,\bm{y}_i, \ldots, \bm{y}_{N_e}\right)$, $\bm{y}_i \in \mathbb{R}^{q}$ are observed experiment responses, $N_e$ is the number of repeated experiments, $\bm{G}(\bm{\theta}_t,\bm{\xi}) \myeq \bm{g}(\bm{\theta}_t,\bm{\xi})\bm{1}$ with $\bm{1} \myeq (1,\ldots,1)$, $\bm{g}(\bm{\theta}_t,\bm{\xi}) \in \mathbb{R}^q$ is the column vector of forward model outputs, $\bm{\theta}_t \in \mathbb{R}^{d}$ is the \emph{true} parameter, $\bm{\xi} \in \Xi$ is the design parameter, $\Xi$ is the experimental design space, and $\Epsilon \myeq (\bm{\epsilon}_1,\ldots,\bm{\epsilon}_i,\ldots,\bm{\epsilon}_{N_e})$, where $\bm{\epsilon}_i \in \mathbb{R}^q$ are independent and identically distributed (i.i.d.) zero-mean Gaussian errors with the covariance matrix $\bm{\Sigma}_{\bm{\epsilon}}$, that is, $\Epsilon_i \myiid \mathcal{N}(\bm{0},\bm{\Sigma}_{\bm{\epsilon}})$ and the distribution $\pi_\epsilon(\Epsilon)=\prod^{N_e}_{i=1}\pi_{\epsilon_i}(\bm{\epsilon}_i)$.

We consider the case when the parameter $\bm{\theta}_t$ is unknown. To this end, we treat $\bm{\theta}_t$ as a random parameter, $\bm{\theta} \in \Theta$, with prior distribution $\pi(\bm{\theta})$, defined on the space $\Theta \subseteq \mathbb{R}^{d}$.

The goal of Bayesian optimal experimental design is to determine the optimal set-up of an experiment as defined by the design parameter $\bm{\xi}$ for Bayesian inference of $\bm{\theta}_t$. The information gain for a given experimental design, $\bm{\xi}$, is measured by the Kullback-Leibler divergence \cite{kullback1959}, which is based on the Shannon entropy \cite{key12}. The Kullback-Leibler divergence, denoted by $\infdiv{\pi(\bm{\theta} \vert \bm{Y}, \bm{\xi})}{\pi(\bm{\theta})}$, is a distance measure between prior $\pi(\bm{\theta})$ and posterior $\pi(\bm{\theta} \vert \bm{Y}, \bm{\xi})$ pdfs, i.e.,
\begin{equation}
\infdiv{\pi(\bm{\theta} \vert \bm{Y}, \bm{\xi})}{\pi(\bm{\theta})} \myeq \int_{\Theta}{\pi(\bm{\theta} \vert \bm{Y}, \bm{\xi})  \log \left( \frac{\pi(\bm{\theta} \vert \bm{Y},\bm{\xi})}{\pi(\bm{\theta})} \right) \,d\bm{\theta}}. \label{eq_dkl1}
\end{equation}
The larger the value of $D_{\text{KL}}$, the more informative the given experiment is about the unknown parameter $\bm{\theta}_t$. The information gains for different designs, $\bm{\xi}$, are mutually independent of each other. Henceforth, we omit dependences on $\bm{\xi}$ for the sake of conciseness. Since $\bm{Y}$ is not available to us during the design selection, we work with the expected value of $D_{\text{KL}}$,
\begin{align}\label{expecinfgain}
I \myeq& \int_{\mathcal{Y}}{\infdiv{\pi(\bm{\theta} \vert \bm{Y})}{\pi(\bm{\theta})}p(\bm{Y}) \,d\bm{Y}} = \int_{\mathcal{Y}} \int_{\Theta}  \log \left( \frac{\pi(\bm{\theta} \vert \bm{Y})}{\pi(\bm{\theta})} \right) \pi(\bm{\theta} \vert \bm{Y}) \,d\bm{\theta} p(\bm{Y}) \,d\bm{Y} \nonumber \\
\quad =& \int_{\Theta} \int_{\mathcal{Y}} \log \left( \frac{p(\bm{Y} \vert \bm{\theta})}{p(\bm{Y})} \right)  p(\bm{Y} \vert \bm{\theta}) \,d\bm{Y} \pi(\bm{\theta}) \,d\bm{\theta},
\end{align}
which is also known as Expected Information Gain (EIG) \cite{key13}, and this is the design criterion considered in this work for Bayesian optimal experimental design. The latter equality follows from Bayes' rule, and $p(\bm{Y})$ denotes the pdf of $\bm{Y}$ over the support $\mathcal{Y} \myeq \mathbb{R}^{q \times N_e}$. In accordance with the data model \eqref{eq_datamodel}, the likelihood, denoted by $p(\bm{Y}\vert \bm{\theta})$, is
\begin{equation*}
p(\bm{Y} \vert \bm{\theta}) \myeq \text{det}\left(2\pi\bm{\Sigma_\epsilon}\right)^{-\frac{N_e}{2}} \exp \left( -\frac{1}{2}  \sum_{i=1}^{N_e} \left\| \bm{y}_{i} - \bm{g}(\bm{\theta})\right\|^2_{\bm{\Sigma_\epsilon}^{-1}} \right),
\end{equation*}
where the matrix norm is $ \|\bm{x}\|^{2}_{\bm{\Sigma}^{-1}_{\bm{\epsilon}}} = \bm{x}^{T} \bm{\Sigma}^{-1}_{\bm{\epsilon}} \bm{x}$ for a vector $\bm{x}$ and covariance matrix $\bm{\Sigma}_{\bm{\epsilon}}$. For notational convenience, we introduce the conditional expectation, 
\begin{align}\label{eq:Z.def}
\mathcal{Z}(\bm{\theta}) &\myeq \mathbb{E}\left[ f(\bm{Y}, \bm{\theta}) \vert \bm{\theta} \right] \nonumber= \int_{\mathcal{Y}} \log \left( \frac{p(\bm{Y} \vert \bm{\theta})}{p(\bm{Y})} \right)  p(\bm{Y} \vert \bm{\theta}) \,d\bm{Y} \nonumber \\
&= \int_{\mathcal{E}} \log \left( \frac{\pi_\epsilon(\Epsilon)}{\int_{\Theta} \text{det}\left(2\pi\bm{\Sigma_\epsilon}\right)^{-\frac{N_e}{2}} \exp \left( -\frac{1}{2}  \sum\limits_{i=1}^{N_e} \left\| \bm{g}(\bm{\theta}) + \bm{\epsilon}_{i} - \bm{g}(\bm{\theta}^\prime)\right\|^2_{\bm{\Sigma_\epsilon}^{-1}} \right) \pi(\bm{\theta}^\prime) \,d\bm{\theta}^\prime} \right) \pi_\epsilon(\Epsilon) \,d\Epsilon,
\end{align}
where
\begin{equation}
\label{eq:f}
f(\bm{Y}, \bm{\theta}) \myeq \log \left(\frac{p(\bm{Y} \vert \bm{\theta})}{p(\bm{Y})} \right).
\end{equation}
Using \eqref{eq:Z.def}, we formulate the EIG criterion \eqref{expecinfgain} as
\begin{equation}
\label{eq:eigf}
I=\mathbb{E}\left[ \mathcal{Z}(\bm{\theta})\right].
\end{equation}

\subsection{Numerical approximation of expected information gain}
\label{sec:assumptions}
We consider the situation when model $\bm{g}$ needs to be evaluated by a numerical approximation through discretization in space of $\bm{g}$, denoted by $\bm{g}_{\ell}$, with an accuracy controlled by the mesh-element size, denoted by $h_{\ell}>0$. We consider a sequence of such discretization-based approximations, $\{ \bm{g}_\ell \}_{\ell=0}^{\infty}$, with decreasing mesh-element size, i.e., $h_{\ell}<h_{\ell-1}$.
\begin{Assumption}[Convergence properties of $\bm{g}_{\ell}$]\quad\\[3pt]
\label{assump:g}
\noindent \textbf{[Weak convergence]} $\exists C_w>0, \eta_w>0$ such that
\begin{equation}
\Vert \mathbb{E}[\bm{g}_{\ell}(\bm{\theta})-\bm{g}(\bm{\theta})]\Vert_{\bm{\Sigma}^{-1}_{\bm{\epsilon}}} \le C_{w}h_{\ell}^{\eta_w},
\end{equation}
for all $\ell$ as $h_\ell \to 0$.\\
\textbf{[Strong convergence]} For $p \ge 2$, $\exists C_s>0,\eta_s>0$ such that
\begin{equation}
\mathbb{E}[\Vert \bm{g}_{\ell}(\bm{\theta})-\bm{g}_{\ell-1}(\bm{\theta}) \Vert^p_{\bm{\Sigma}^{-1}_{\bm{\epsilon}}}]^{\frac{1}{p}} \le C_{s}h_{\ell}^{\eta_s},
\end{equation}
for all $\ell>0$ as $h_{\ell} \to 0$.\\
\textbf{[Average computational work]} $\exists \gamma>0$ such that
\begin{equation}
\label{eq:gellwork}
W(\bm{g}_{\ell}) \propto h_{\ell}^{-\gamma},
\end{equation}
with respect to $\ell$ as $h_\ell \to 0$, wherein $W(\cdot)$ denotes the average computational work.\\
\end{Assumption}
Assumption \ref{assump:g} states some necessary assumptions on $\bm{g}_{\ell}$ that will be used later on. Furthermore, $\bm{g}_{\ell}$ needs to be twice differentiable with respect to $\bm{\theta}$ and uniformly bounded by some constant independent of $\ell$. Since we have assumed that we only work with the approximations of $\bm{g}$, let us introduce the data model
\begin{equation}
\label{eq:approxdata}
\bm{Y}=\bm{G}_\ell(\bm{\theta})+\Epsilon,
\end{equation}
where $\bm{G}_{\ell}(\bm{\theta}) \myeq \bm{g}_{\ell}(\bm{\theta})\bm{1}$ approximates forward model $\bm{G}(\bm{\theta})$. The approximate EIG criterion \eqref{eq:eigf}, given the approximate data model \eqref{eq:approxdata}, is defined as follows:
\begin{equation}
\label{eq:klfl}
I_{\ell} \myeq \int_{\Theta} \mathcal{Z}_{\ell}(\bm{\theta}) \pi(\bm{\theta}) \,d\bm{\theta},
\end{equation}
where
\begin{align}
\label{eq:Zell}
\mathcal{Z}_{\ell}(\bm{\theta}) &\myeq \int_{\mathcal{Y}} f_{\ell}(\bm{Y},\bm{\theta}) p_{\ell}(\bm{Y} \vert \bm{\theta}) \,d\bm{Y} \nonumber\\
&= \int_{\mathcal{E}} \log \left( \frac{\pi_\epsilon(\Epsilon)}{\int_{\Theta} \text{det}\left(2\pi\bm{\Sigma_\epsilon}\right)^{-\frac{N_e}{2}} \exp \left( -\frac{1}{2}  \sum\limits_{i=1}^{N_e} \left\| \bm{g}_\ell(\bm{\theta}) + \bm{\epsilon}_{i} - \bm{g}_\ell(\bm{\theta}^\prime)\right\|^2_{\bm{\Sigma_\epsilon}^{-1}} \right) \pi(\bm{\theta}^\prime) \,d\bm{\theta}^\prime} \right) \pi_\epsilon(\Epsilon) \,d\Epsilon,
\end{align}

from using $\bm{g}_{\ell}$ in $\mathcal{Z}(\bm{\theta})$, and the log-ratio $f$ is approximated by
\begin{equation}
\label{eq:fell}
f_{\ell}(\bm{Y}, \bm{\theta}) \myeq \log \left(\frac{p_{\ell}(\bm{Y} \vert \bm{\theta})}{p_{\ell}(\bm{Y})} \right).
\end{equation}
We define an approximate likelihood by
\begin{equation}
\label{eq:approxlike}
p_{\ell}(\bm{Y} \vert \bm{\theta}) \myeq \text{det}\left(2\pi\bm{\Sigma_\epsilon} \right)^{-\frac{N_e}{2}} \exp \left( -\frac{1}{2}  \sum_{i=1}^{N_e} \left\| \bm{y}_{i} - \bm{g}_{\ell}(\bm{\theta})\right\|^2_{\bm{\Sigma_\epsilon}^{-1}} \right),
\end{equation}
and we approximate the evidence, for any $\bm{Y}$ satisfying the approximate data model \eqref{eq:approxdata}, as
\begin{equation*}
p_{\ell}(\bm{Y}) \myeq \int_{\Theta} p_{\ell}(\bm{Y} \vert \bm{\theta}^\prime) \pi(\bm{\theta}^\prime) d\bm{\theta}^\prime=\int_{\Theta} \text{det}\left(2\pi\bm{\Sigma_\epsilon}\right)^{-\frac{N_e}{2}} \exp \left( -\frac{1}{2}  \sum_{i=1}^{N_e} \left\| \bm{g}_\ell(\bm{\theta}) + \bm{\epsilon}_{i} - \bm{g}_\ell(\bm{\theta}^\prime)\right\|^2_{\bm{\Sigma_\epsilon}^{-1}} \right) \pi(\bm{\theta}^\prime) \,d\bm{\theta}^\prime.
\end{equation*}

\subsection{Laplace-based importance sampling in the expected information gain}
\label{sec:laplace}
Whenever the posterior distribution, $\pi_{\ell}(\bm{\theta} \vert \bm{Y})=p_{\ell}(\bm{Y} \vert \bm{\theta})\pi(\bm{\theta})/p_{\ell}(\bm{Y})$, can be well approximated by a multivariate normal distribution, we advocate using the Laplace-based importance sampling \cite{BDELT2018,RDP2015}. More specifically, we introduce an importance sampling distribution, denoted by $\tilde{\pi}_{\ell}(\bm{\theta} \vert \bm{Y})$, to compute the approximate evidence $p_{\ell}(\bm{Y})$ as follows:
\begin{align}
\label{eq:evidence}
p_{\ell}(\bm{Y})&=\int_{\Theta} p_{\ell}(\bm{Y} \vert \bm{\theta}^\prime)\pi(\bm{\theta}^\prime) d\bm{\theta}^\prime=\int_{\Theta} p_{\ell}(\bm{Y} \vert \bm{\theta}^\prime)R_{\ell}(\bm{\theta}^\prime;\bm{Y})\tilde{\pi}_{\ell}(\bm{\theta}^\prime \vert \bm{Y}) d\bm{\theta}^\prime \nonumber\\
&\mskip-50mu= \int_{\Theta} \text{det}\left(2\pi\bm{\Sigma_\epsilon}\right)^{-\frac{N_e}{2}} \exp \left( -\frac{1}{2}  \sum_{i=1}^{N_e} \left\| \bm{g}_\ell(\bm{\theta}) + \bm{\epsilon}_{i} - \bm{g}_\ell(\bm{\theta}^\prime)\right\|^2_{\bm{\Sigma_\epsilon}^{-1}} \right) R_\ell (\bm{\theta}^\prime;\bm{G}_\ell(\bm{\theta}) + \Epsilon) \tilde{\pi}_\ell(\bm{\theta}^\prime \vert \bm{G}_\ell(\bm{\theta}) + \Epsilon) \,d\bm{\theta}^\prime,
\end{align}
where the ratio, $R_{\ell}$, is
\begin{equation}
\label{eq:Rell}
R_{\ell}(\bm{\theta};\bm{Y}) \myeq \pi(\bm{\theta})/\tilde{\pi}_{\ell}(\bm{\theta} \vert \bm{Y}).
\end{equation}
The Laplace-based importance sampling measure, $\tilde{\pi}_{\ell}$, is a multivariate normal pdf, denoted \\ by $\mathcal{N}(\hat{\bm{\theta}}_{\ell}(\bm{Y}),\bm{\Sigma}_{\ell}(\hat{\bm{\theta}}_{\ell}(\bm{Y})))$, i.e.,
\begin{equation}
\label{eq:istilde}
\tilde{\pi}_{\ell}(\bm{\theta} \vert \bm{Y}) \myeq \text{det}\left(2\pi\bm{\Sigma}_{\ell}(\hat{\bm{\theta}}_{\ell}(\bm{Y})) \right)^{-\frac{1}{2}} \exp \left( -\frac{1}{2} \left\| \bm{\theta} - \hat{\bm{\theta}}_{\ell}(\bm{Y})\right\|^2_{\bm{\Sigma}^{-1}_{\ell}(\hat{\bm{\theta}}_{\ell}(\bm{Y}))} \right),
\end{equation}
where $\hat{\bm{\theta}}_{\ell}(\bm{Y})$ is the \emph{maximum a posteriori} (MAP) estimate,
\begin{eqnarray}
 \label{fittheta}
 \hat{\bm{\theta}}_{\ell}(\bm{Y})  & \myeq & \underset{\bm{\theta} \in \Theta}{\arg\min} \left[ \frac{1}{2} \sum_{i=1}^{N_e} \left\| \bm{y}_{i} - \bm{g}_{\ell}(\bm{\theta}) \right\|^2_{\bm{\Sigma_\epsilon}^{-1}}-\log (\pi(\bm{\theta})) \right],
\end{eqnarray}
and, as shown in \cite{key15}, the covariance is the inverse Hessian matrix of the negative logarithm of the posterior pdf,
\begin{equation}
\label{eq:sigmahat}
\bm{\Sigma}_{\ell} (\bm{\theta}) \myeq \Big( N_e \bm{J}_{\ell}(\bm{\theta})^T\bm{\Sigma_\epsilon}^{-1} \bm{J}_{\ell}(\bm{\theta})-\nabla_{\bm{\theta}}  \nabla_{\bm{\theta}}  \log (\pi(\bm{\theta})) \Big)^{-1} + \mathcal{O}_\mathbb{P}\left(\frac{1}{\sqrt{N_e}}\right),
\end{equation}
where $\bm{J}_{\ell}(\bm{\theta})\myeq-\nabla_{\bm{\theta}} \bm{g}_{\ell}(\bm{\theta})$. Note that $\hat{\bm{\theta}}_{\ell}$ depends on the data $\bm{Y}$. Moreover, \eqref{eq:sigmahat} says that the larger the number of repetitive experiments $N_e$, the more accurately we can approximate the covariance $\bm{\Sigma}_{\ell}(\hat{\bm{\theta}}_{\ell}(\bm{Y}))$ of the importance-sampling pdf, $\tilde{\pi}_{\ell}$.

\section{Multilevel Double Loop Monte Carlo}
The standard MLMC \cite{key14,key25} has been widely applied and extended to various problems \cite{G2015}. The idea behind multilevel methods is to not only compute the expectation of the quantity of interest using $\bm{g}_{\ell}$ for a fine mesh-element size $h_{\ell}$, but instead reducing the computational work complexity by distributing the computations over a sequence of $L+1$ mesh-element sizes, $\{ h_{\ell} \}^L_{\ell=0}$, from coarse to fine meshes, and then combine the results. Multilevel methods distribute the computational workload such that the majority of the model evaluations are on the coarser meshes. The classical choice of decreasing sequence is
\begin{equation}
\label{eq:hell}
h_{\ell} \myeq \beta^{-\ell}h_0, \quad \text{for some}\,\, \beta \in \mathbb{N}^{+};
\end{equation}
typically $\beta=2$, i.e., to progressively halve the size with increasing $\ell$, and with $h_0$ being the coarsest mesh-element size considered.

\subsection{Multilevel Double Loop Monte Carlo (MLDLMC) estimator}
\label{sec:meif}
The approximate EIG \eqref{eq:klfl} at level $L$ can be written as the telescopic sum
\begin{equation}\label{eq:telescop}
I_{L}=\sum_{\ell=0}^{L}{\mathbb{E}\left[\Delta\mathcal{Z}_{\ell}(\bm{\theta})\right]},
\end{equation}
where the index $\ell$ is here referred to as the ``level,'' and
\begin{align}
  \label{eq:deltaZ}
   \Delta[\mathcal{Z}_{\ell}(\bm{\theta})] & \myeq
    \begin{cases}
     \mathcal{Z}_{\ell}(\bm{\theta})-\mathcal{Z}_{\ell-1}(\bm{\theta}) & \text{if $\ell>0$,}\\
     \mathcal{Z}_{\ell}(\bm{\theta}) & \text{if $\ell=0$}.
    \end{cases}
\end{align}

The function $\mathcal{Z}_{\ell}$ depends on $f_{\ell}$, \eqref{eq:Zell}, and, in turn, $f_{\ell}$ depends on the approximate evidence $p_{\ell}$ \eqref{eq:evidence}. We estimate the approximate evidence by MC sampling with the Laplace-based importance sampling described in Section \ref{sec:laplace}, i.e., for each $\bm{\theta} \myiid \pi(\bm{\theta})$ and $\Epsilon_i \myiid \mathcal{N}(\bm{0},\bm{\Sigma}_{\bm{\epsilon}})$, sample $\bm{Y}=\bm{G}_\ell(\bm{\theta})+\Epsilon$, then compute the approximate evidence as follows:
\begin{equation}
\label{eq:pM}
\hat{p}_{\ell}(\bm{Y},\bm{\theta};\{\bm{\theta}_m\}) \myeq \frac{1}{M_{\ell}}\sum^{M_{\ell}}_{m=1} p_{\ell}(\bm{Y} \vert \bm{\theta}_{m})R_{\ell}(\bm{\theta};\bm{Y}) \approx p_{\ell}(\bm{Y}),
\end{equation}
where $\bm{\theta}_m \myiid \tilde{\pi}_\ell(\bm{\theta}|\bm{Y})=\mathcal{N}(\hat{\bm{\theta}}_\ell(\bm{Y}),\bm{\Sigma}(\hat{\bm{\theta}}_{\ell}(\bm{Y})))$ from expressions \eqref{eq:istilde} and \eqref{fittheta}, the number of samples is denoted by $M_{\ell}$, and $R_\ell(\bm{\theta};\bm{Y})$ is given in \eqref{eq:Rell}. As in standard MLMC \cite{key14,G2015}, we apply sample averaging to the $L+1$ telescopic, conditional expectation, differences, to obtain an MLDLMC estimator for the EIG criterion defined in \eqref{expecinfgain}. Let us introduce 
\begin{equation}
\label{eq:fhat}
\hat{f}_{\ell}(\bm{Y},\bm{\theta};\{\bm{\theta}_m\}) \myeq \log \left(\frac{p_{\ell}(\bm{Y} \vert \bm{\theta})}{\hat{p}_{\ell}(\bm{Y},\bm{\theta};\{\bm{\theta}_m\})} \right).
\end{equation}
Then, by using the approximate EIG \eqref{eq:telescop} with data following \eqref{eq:approxdata} for each level $\ell$ in \eqref{eq:telescop}, the MLDLMC estimator for the EIG criterion \eqref{expecinfgain} reads
\begin{eqnarray}
\label{eq:mldl}
\mathcal{I}_{\text{MLDLMC}} &\myeq& \frac{1}{N_0}\sum^{N_0}_{n=1} \hat{f}_{0}(\bm{Y}^{(0)}_{0,n},\bm{\theta}_{0,n};\{ \bm{\theta}_{0,n,m}\}^{M_0}_{m=1})\nonumber\\
&+&\sum_{\ell=1}^{L} \frac{1}{N_{\ell}} \sum^{N_{\ell}}_{n=1} \left[ \hat{f}_{\ell}(\bm{Y}^{(\ell)}_{\ell,n},\bm{\theta}_{\ell,n};\{\bm{\theta}_{\ell,n,m}\}^{M_\ell}_{m=1})-\hat{f}_{\ell-1}(\bm{Y}^{(\ell-1)}_{\ell,n},\bm{\theta}_{\ell,n};\{\bm{\theta}_{\ell,n,m}\}^{M_{\ell-1}}_{m=1}) \right],
\end{eqnarray}
where $\bm{Y}^{(k)}_{\ell,n} = \bm{G}_k(\bm{\theta}_{\ell,n}) + \Epsilon_{\ell,n} \myiid p_{k}(\bm{Y} \vert \bm{\theta}_{\ell,n})$, $\bm{\theta}_{\ell,n} \myiid \pi(\bm{\theta})$, $\bm{\theta}_{k,n,m} \myiid \tilde{\pi}_{k}(\bm{\theta} \vert \bm{Y}^{(k)}_{\ell,n}) = \mathcal{N}(\hat{\bm{\theta}}(\bm{Y}^{(k)}_{\ell,n}),\bm{\Sigma}(\hat{\bm{\theta}}(\bm{Y}^{(k)}_{\ell,n})))$, and $\{\bm{\theta}_{\ell,n,m}\}^{M_{\ell-1}}_{m=1} \subseteq \{\bm{\theta}_{\ell,n,m}\}^{M_{\ell}}_{m=1}$. Here the superscript of $\bm{Y}^{(k)}$ implies that the data $\bm{Y}$ depends on $p_k$, as defined in \eqref{eq:approxlike}.

\begin{Remark}[Choice of $\hat{\bm{\theta}}_n$]
As shown in \cite{BDELT2018,RDP2015}, a Laplace-based importance sampling centered on the MAP estimate $\hat{\bm{\theta}}_n \myeq \hat{\bm{\theta}}(\bm{Y}_n)$ can drastically reduce the number of inner samples. In fact, it was demonstrated in \cite{BDELT2018} that even a few samples can be sufficient for moderate error tolerances, which is equivalent to using the Laplace method as in \cite{key15} but centered on $\hat{\bm{\theta}}_n$ instead of $\bm{\theta}_n$, where $\bm{\theta}_n$ is the parameter that is used to generate the data $\bm{Y}_n$.
To estimate $\hat{\bm{\theta}}_n$, we require additional evaluations of the forward model for each outer sample. The search for $\hat{\bm{\theta}}_n$ by solving the optimization problem \eqref{fittheta} is substantially reduced when initialized at $\bm{\theta}_n$. As mentioned above, an alternative approach is to center the new measure on $\bm{\theta}_n$, but this is a less accurate approximation because the discrepancy between $\bm{\theta}_n$ and the MAP estimate $\hat{\bm{\theta}}_n$ may be large, risking underflow, which was discussed in detail in \cite{BDELT2018}.
\end{Remark}

\subsection{Bias, variance and work analysis}
\label{sec:biasvarwork}
The bias, variance and computational work of the MLDLMC estimator, \eqref{eq:mldl}, need to be analyzed. MLDLMC is a consistent estimator, i.e., the bias goes to zero asymptotically, and its bias can be bounded from above by, as $L \to \infty$ and $M_L \to \infty$,
\begin{equation}
\label{eq:biasdlmc}
\vert I-\mathbb{E}\left[ \mathcal{I}_{\text{MLDLMC}} \right] \vert \lessapprox C_2h_L^{\eta_w}+C_1M_{L}^{-1},
\end{equation}
with
\begin{equation}
C_{1} = \frac{1}{2}\mathbb{E}\left[\mathbb{V} \left[ \frac{ p(\bm{Y} \vert \bm{\theta})}{p(\bm{Y})} \vert \bm{Y}\right] \right],
\end{equation}
and $C_{2}$ being the constant for the weak convergence of $I_{\ell}$. The upper bound for the bias \eqref{eq:biasdlmc} is from the bias result in Proposition 1 \cite{BDELT2018} for the DLMC estimator with mesh-element size $h_{L}$. The variance of the MLDLMC estimator is
\begin{eqnarray}
\mathbb{V}\left[ \mathcal{I}_{\text{MLDLMC}} \right]=\frac{V_0}{N_0}+\sum^{L}_{\ell=1} \dfrac{V_{\ell}}{N_\ell},
\end{eqnarray}
since the samples for each level are mutually independent to the those of the other levels. Here $V_0$ denotes the variance of the DLMC estimator at the coarsest level ($\ell=0$),
\begin{equation}
V_{0} \myeq \mathbb{V}\left[ \hat{f}_{0}(\bm{Y}^{(0)}_{0,n},\bm{\theta}_{0,n};\{ \bm{\theta}_{0,n,m}\}^{M_0}_{m=1}) \right],
\end{equation}
and, as shown in Proposition 1 \cite{BDELT2018}, as $M_0 \to \infty$ the variance $V_0$ behaves as
\begin{equation}
\label{eq:V0est}
V_{0} \approx C_3+\frac{C_4}{M_0},
\end{equation}
for some constants $C_3,C_4>0$, where $C_4 \ll C_3$ due to the Laplace-based importance sampling \cite{BDELT2018}. The expectation and the variance of the telescopic, conditional expectation, difference at level $\ell$ are defined by
\begin{equation}\label{eq:E_ell}
E_{\ell} \myeq \mathbb{E}\left[ \hat{f}_{\ell}(\bm{Y}^{(\ell)}_{\ell,n},\bm{\theta}_{\ell,n};\{\bm{\theta}_{\ell,n,m}\}^{M_\ell}_{m=1})-\hat{f}_{\ell-1}(\bm{Y}^{(\ell-1)}_{\ell,n},\bm{\theta}_{\ell,n};\{\bm{\theta}_{\ell,n,m}\}^{M_{\ell-1}}_{m=1}) \right] \ \text{for} \ \ell>0,
\end{equation}
and
\begin{equation}\label{eq:V_ell}
V_{\ell} \myeq \mathbb{V}\left[ \hat{f}_{\ell}(\bm{Y}^{(\ell)}_{\ell,n},\bm{\theta}_{\ell,n};\{\bm{\theta}_{\ell,n,m}\}^{M_\ell}_{m=1})-\hat{f}_{\ell-1}(\bm{Y}^{(\ell-1)}_{\ell,n},\bm{\theta}_{\ell,n};\{\bm{\theta}_{\ell,n,m}\}^{M_{\ell-1}}_{m=1}) \right], \ \text{for} \ \ell>0,
\end{equation}
respectively. Theorem \ref{lem:var} provides an asymptotic upper bound to $V_\ell$ for $\ell>0$.
\begin{theorem}\label{lem:var} Given Assumption \ref{assump:g}, then for $\ell>0$,
\begin{equation}
\label{eq:Vlest}
V_{\ell} \lessapprox  \left[ M_\ell \left(\dfrac{1}{M_\ell} - \dfrac{1}{M_{\ell-1}}\right)^2 + \dfrac{h^{2\eta_s}_\ell}{M_{\ell-1}} + \dfrac{M_\ell - M_{\ell-1}}{M_{\ell-1}^2} \right] + h_\ell^{2\eta_w},
\end{equation}
as $M_0 \to \infty$.
\end{theorem}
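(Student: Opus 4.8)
The plan is to exploit the couplings built into the telescopic difference and reduce everything to the inner Monte Carlo fluctuations of the evidence estimates. First I would record a cancellation. In $\hat f_\ell(\bm Y^{(\ell)}_{\ell,n},\bm\theta_{\ell,n};\cdot)$ and $\hat f_{\ell-1}(\bm Y^{(\ell-1)}_{\ell,n},\bm\theta_{\ell,n};\cdot)$ the outer parameter $\bm\theta_{\ell,n}$ and the noise $\Epsilon_{\ell,n}$ are shared, and since $\bm Y^{(k)}_{\ell,n}=\bm G_k(\bm\theta_{\ell,n})+\Epsilon_{\ell,n}$ the residual $\bm y_i-\bm g_k(\bm\theta_{\ell,n})=\bm\epsilon_i$ is level-independent. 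Hence the log-likelihood numerators $\log p_\ell(\bm Y^{(\ell)}_{\ell,n}\vert\bm\theta_{\ell,n})$ and $\log p_{\ell-1}(\bm Y^{(\ell-1)}_{\ell,n}\vert\bm\theta_{\ell,n})$ coincide and drop out, leaving $\hat f_\ell-\hat f_{\ell-1}=\log\hat p_{\ell-1}-\log\hat p_\ell$, a difference of two log-evidence estimates. Writing $\hat p_k=p_k(\bm Y^{(k)})(1+\delta_k)$ with $\delta_k=\frac1{M_k}\sum_{m=1}^{M_k}\zeta_{k,m}$ and $\zeta_{k,m}\myeq p_k(\bm Y^{(k)}\vert\bm\theta_{k,n,m})R_k(\bm\theta_{k,n,m};\bm Y^{(k)})/p_k(\bm Y^{(k)})-1$ the centred normalized importance weights (so $\mathbb{E}[\zeta_{k,m}\vert\bm Y^{(k)}]=0$), I would linearize via $\log(1+\delta_k)=\delta_k-\tfrac12\delta_k^2+\dots$, so that to leading order $\hat f_\ell-\hat f_{\ell-1}=\big[\log p_{\ell-1}(\bm Y^{(\ell-1)})-\log p_\ell(\bm Y^{(\ell)})\big]+(\delta_{\ell-1}-\delta_\ell)+(\text{higher order})$, paralleling the single-level expansion in \cite{BDELT2018}.

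Next I would apply the law of total variance, conditioning on the outer sample $(\bm\theta_{\ell,n},\Epsilon_{\ell,n})$. The inner (conditional) variance is $\mathbb{V}[\delta_{\ell-1}-\delta_\ell\mid\text{outer}]$ to leading order, and here the crucial point is the common-random-numbers coupling: both inner ensembles are generated from a single stream $\{Z_m\}$ pushed through the level-dependent affine Laplace maps $\bm\theta\mapsto\hat{\bm\theta}_k(\bm Y^{(k)})+\bm\Sigma_k^{1/2}Z$, with the coarse ensemble reusing the first $M_{\ell-1}$ draws ($\{\bm\theta_{\ell,n,m}\}^{M_{\ell-1}}\subseteq\{\bm\theta_{\ell,n,m}\}^{M_\ell}$). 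Splitting the fine average accordingly I would write $\delta_{\ell-1}-\delta_\ell=\frac1{M_{\ell-1}}\sum_{m=1}^{M_{\ell-1}}(\zeta_{\ell-1,m}-\zeta_{\ell,m})+\big(\frac1{M_{\ell-1}}-\frac1{M_\ell}\big)\sum_{m=1}^{M_{\ell-1}}\zeta_{\ell,m}-\frac1{M_\ell}\sum_{m=M_{\ell-1}+1}^{M_\ell}\zeta_{\ell,m}$. The three groups contribute, respectively, $\tau^2/M_{\ell-1}$ with $\tau^2\myeq\mathbb{V}[\zeta_{\ell-1,m}-\zeta_{\ell,m}\mid\text{outer}]$, then $M_{\ell-1}\big(\frac1{M_\ell}-\frac1{M_{\ell-1}}\big)^2\sigma_\ell^2$, and $\frac{M_\ell-M_{\ell-1}}{M_\ell^2}\sigma_\ell^2$, where $\sigma_\ell^2\myeq\mathbb{V}[\zeta_{\ell,m}\mid\text{outer}]$; the first two groups are handled jointly via Cauchy--Schwarz and the third is independent. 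Bounding $\tau^2\lesssim h_\ell^{2\eta_s}$ by the strong convergence of $\bm g_\ell-\bm g_{\ell-1}$, and using $M_{\ell-1}\le M_\ell$ together with uniform boundedness of the relative weight variance $\sigma_\ell^2$ (the essential gain from the Laplace-based importance sampling), reproduces after taking the outer expectation the three bracketed terms $\frac{h_\ell^{2\eta_s}}{M_{\ell-1}}$, $M_\ell\big(\frac1{M_\ell}-\frac1{M_{\ell-1}}\big)^2$, and $\frac{M_\ell-M_{\ell-1}}{M_{\ell-1}^2}$.

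Finally, the variance of the conditional mean — essentially $\mathbb{V}[\log p_{\ell-1}(\bm Y^{(\ell-1)})-\log p_\ell(\bm Y^{(\ell)})]$, the purely mesh-driven fluctuation — I would control with Assumption \ref{assump:g}: a first-order expansion in $\bm g_\ell-\bm g_{\ell-1}$ exhibits it as a bounded linear functional of the model difference whose second moment is governed by the convergence rates, yielding the remaining $h_\ell^{2\eta_w}$ contribution (consistent with the weak-convergence constant $C_2$ of $I_\ell$). The higher-order $\delta^2$ terms and the remaining cross covariances are of strictly smaller order in $1/M_0$ and are absorbed into the ``$\lessapprox$'' as $M_0\to\infty$. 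I expect the main obstacle to be twofold. First, making the $\log(1+\delta_k)$ linearization rigorous requires uniform higher-moment (light-tail) control of the importance weights, so that the logarithm and its remainder are well defined and of order $o(1/M_0)$ despite the risk of near-underflow of $\hat p_k$ — the classical difficulty in analyzing logarithms of positive Monte Carlo estimators. Second, and most specific to this theorem, is the quantitative step $\tau^2\lesssim h_\ell^{2\eta_s}$, which demands that the common-random-numbers coupling transfer the strong-convergence rate of $\bm g_\ell-\bm g_{\ell-1}$ to the difference of inner weights; establishing this rests on Lipschitz/differentiable dependence of the MAP estimate $\hat{\bm\theta}_\ell(\bm Y)$, the covariance $\bm\Sigma_\ell$, and the weight function on the forward model, for which the twice-differentiability and uniform bounds on $\bm g_\ell$ in Assumption \ref{assump:g} are exactly what is required.
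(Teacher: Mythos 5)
Your proposal is correct and follows essentially the same route as the paper's own proof: cancel the shared log-likelihood numerators so that $\hat f_\ell-\hat f_{\ell-1}$ reduces to a log-ratio of evidence estimates, linearize the logarithm about the conditional means, split the linearized fluctuation into three groups according to the nested inner samples (yielding the three bracketed terms), and close with the law of total variance, using strong convergence together with Cauchy--Schwarz/H\"older for the coupled-difference term and weak convergence of the conditional mean for the $h_\ell^{2\eta_w}$ term. The only cosmetic difference is bookkeeping of the coupling --- you push common standard normals through level-dependent Laplace maps, while the paper's proof reuses the same inner points and the fine-level ratio $R_\ell$ --- which changes neither the decomposition nor the resulting bound.
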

\begin{proof}
The goal is to find an upper bound for the variance of the differences in the MLDLMC estimator \eqref{eq:mldl} denoted by $V_{\ell}$, i.e.,
\begin{equation}
\label{eq:Velbound}
V_{\ell} \myeq \mathbb{V}\left[ \hat{f}_{\ell}(\bm{Y},\bm{\theta};\{\bm{\theta}_{\ell,n,m}\}^{M_\ell}_{m=1})-\hat{f}_{\ell-1}(\bm{Y},\bm{\theta};\{\bm{\theta}_{\ell,n,m}\}^{M_{\ell-1}}_{m=1}) \right] = \mathbb{V}\left[ \log\left(\dfrac{\hat{p}_\ell(\bm{Y},\bm{\theta};\{ \bm{\theta_{m}} \}_{m=1}^{M_\ell})}{\hat{p}_{\ell-1}(\bm{Y},\bm{\theta};\{ \bm{\theta_{m}} \}_{m=1}^{M_{\ell-1}})}\right) \right].
\end{equation}
First, consider the ratio between the approximate evidences evaluated for $\ell$ and $\ell-1$,
\begin{equation}\label{eq:evidence.ratio}
\log\left(\dfrac{\hat{p}_\ell(\bm{Y},\bm{\theta};\{ \bm{\theta_{m}} \}_{m=1}^{M_\ell})}{\hat{p}_{\ell-1}(\bm{Y},\bm{\theta};\{ \bm{\theta_{m}} \}_{m=1}^{M_{\ell-1}})}\right),
\end{equation}
where
\begin{equation}\label{eq:Xell}
\hat{p}_\ell(\bm{Y},\bm{\theta};\{ \bm{\theta}_{m} \}_{m=1}^{M_\ell}) \propto \dfrac{1}{M_\ell} \sum_{m=1}^{M_\ell} \underbrace{\exp\left( -\frac{1}{2}  \sum_{i=1}^{N_e} \left\| \bm{y}^{(\ell)}_{i,\ell,n} - \bm{g}_{i,\ell}(\bm{\theta}_{\ell,n,m})\right\|^2_{\bm{\Sigma_\epsilon}^{-1}} \right)R_{\ell,n,m}}_{\myeq X^{(\ell)}_{\ell,n,m}},
\end{equation}
and denote $\mu^{(\ell)}_{\ell,n} \myeq \mathbb{E}[X^{(\ell)}_{\ell,n,m}|\bm{\theta}_{\ell,n},\bm{\epsilon}_{\ell,n}]$ and $R_{\ell,n,m} \myeq R_{\ell}(\bm{\theta}_{\ell,n,m};\bm{Y}^{(\ell)}_{\ell,n})$.
Similarly, consider
\begin{equation}\label{eq:Xellm1}
\hat{p}_{\ell-1}(\bm{Y},\bm{\theta};\{ \bm{\theta}_{m} \}_{m=1}^{M_{\ell-1}}) \propto \dfrac{1}{M_{\ell-1}} \sum_{m=1}^{M_\ell} \underbrace{\exp\left( -\frac{1}{2}  \sum_{i=1}^{N_e} \left\| \bm{y}^{(\ell-1)}_{i,\ell,n} - \bm{g}_{i,\ell}(\bm{\theta}_{\ell,n,m})\right\|^2_{\bm{\Sigma_\epsilon}^{-1}}\right) R_{\ell,n,m}}_{\myeq X^{(\ell-1)}_{\ell,n,m}},
\end{equation}
and denote $\mu^{(\ell-1)}_{\ell,n} \myeq \mathbb{E}[X^{(\ell-1)}_{\ell,n,m}|\bm{\theta}_{\ell,n},\bm{\epsilon}_{\ell,n}]$. The approximate evidence ratio \eqref{eq:evidence.ratio} is then rewritten as
\begin{align}\label{eq:evidence.ratio.specialized}
\log\left(\dfrac{\hat{p}_\ell(\bm{Y},\bm{\theta};\{ \bm{\theta}_{m} \}_{m=1}^{M_\ell})}{\hat{p}_{\ell-1}(\bm{Y},\bm{\theta};\{ \bm{\theta}_{m} \}_{m=1}^{M_{\ell-1}})}\right)=& \log\left[\left(\dfrac{\mu^{(\ell)}_{\ell,n}}{\mu^{(\ell-1)}_{\ell,n}}-1\right)+1\right] + \log\left[ \dfrac{\dfrac{1}{M_\ell}\sum_{m=1}^{M_\ell} \left( X^{(\ell)}_{\ell,n,m} - \mu^{(\ell)}_{\ell,n}\right)}{\mu^{(\ell)}_{\ell,n}} + 1 \right] \nonumber \\
&- \log\left[ \dfrac{\dfrac{1}{M_{\ell-1}}\sum_{m=1}^{M_{\ell-1}} \left( X^{(\ell-1)}_{\ell,n,m} - \mu^{(\ell-1)}_{\ell,n}\right)}{\mu^{(\ell-1)}_{\ell,n}} + 1 \right].
\end{align}
We will use that $M_\ell \to \infty$ for all $\ell$ since $M_0 \to \infty$ and $M_{\ell} \ge M_{\ell-1}$ for $\ell>0$. Then, by using Taylor expansion, $\log(x+1) \sim x$ in a neighborhood of zero, we are led to approximate \eqref{eq:evidence.ratio.specialized} by
\begin{align}\label{eq:evidence.ratio.approx}
\log\left(\dfrac{\hat{p}_\ell(\bm{Y},\bm{\theta};\{ \bm{\theta}_{m} \}_{M_\ell})}{\hat{p}_{\ell-1}(\bm{Y},\bm{\theta};\{ \bm{\theta}_{m} \}_{M_{\ell-1}})}\right) -& \left(\dfrac{\mu^{(\ell)}_{n}}{\mu^{(\ell-1)}_{\ell,n}}-1\right) \nonumber \\
\sim & \dfrac{1}{\mu^{(\ell)}_{\ell,n} M_\ell} \sum_{m=1}^{M_\ell} \left( X^{(\ell)}_{\ell,n,m} - \mu^{(\ell)}_{\ell,n}\right) - \dfrac{1}{\mu^{(\ell-1)}_{\ell,n} M_{\ell-1}} \sum_{m=1}^{M_{\ell-1}} \left( X^{(\ell-1)}_{\ell,n,m} - \mu^{(\ell-1)}_{\ell,n}\right), \nonumber\\
=&\left(\dfrac{1}{\mu^{(\ell)}_{\ell,n} M_\ell} - \dfrac{1}{\mu^{(\ell-1)}_{\ell,n} M_{\ell-1}}\right) \sum_{m=1}^{M_\ell} \left( X^{(\ell)}_{\ell,n,m} - \mu^{(\ell)}_{\ell,n}\right) \nonumber \\
&+ \dfrac{1}{\mu^{(\ell-1)}_{\ell,n} M_{\ell-1}} \Bigg\{ \sum_{m=1}^{M_{\ell-1}} \left[\left(X^{(\ell)}_{\ell,n,m} - X^{(\ell-1)}_{\ell,n,m}\right) - \left(\mu^{(\ell)}_{\ell,n} - \mu^{(\ell-1)}_{\ell,n}\right)\right] \nonumber \\
&+ \sum_{m=M_{\ell-1}+1}^{M_{\ell}} \left( X^{(\ell)}_{\ell,n,m} - \mu^{(\ell)}_{\ell,n}\right) \Bigg\},
\end{align}
where we denote the first term as $A$ and the sum of the last two as $B$. Thus,
\begin{equation}\label{eq:var.sum.approx}
\mathbb{V}[A+B] = \mathbb{V}[A] + \mathbb{V}[B] + 2\mathbb{C}[A,B] \le 2 (\mathbb{V}[A] + \mathbb{V}[B]).
\end{equation}
Next, denoting for any random variable $Z$ conditional expectations and variances by
\[
\mathbb{E}_{\ell,n}\left[Z\right] \myeq
\mathbb{E}\left[Z|\bm{\theta}_{\ell,n},\bm{\epsilon}_{\ell,n}\right] , \,\,
\mathbb{V}_{\ell,n}\left[Z\right] \myeq
\mathbb{V}\left[Z|\bm{\theta}_{\ell,n},\bm{\epsilon}_{\ell,n}\right],
\]
we estimate the variance of expression \eqref{eq:evidence.ratio.approx} by combining it with expression \eqref{eq:var.sum.approx}, we arrive at the following conditional variance estimate:
\begin{align}
\mathbb{V}_{\ell,n}\left[\log\left(\dfrac{\hat{p}_\ell(\bm{Y},\bm{\theta};\{ \bm{\theta_{m}} \}^{M_\ell}_{m=1})}{\hat{p}_{\ell-1}(\bm{Y},\bm{\theta};\{ \bm{\theta_{m}} \}^{M_{\ell-1}}_{m=1})}\right)\right] \le & 2 \left(\dfrac{1}{\mu^{(\ell)}_{\ell,n} M_\ell} - \dfrac{1}{\mu^{(\ell-1)}_{\ell,n} M_{\ell-1}}\right)^2 M_\ell \mathbb{V}_{\ell,n}\left[X^{(\ell)}_{\ell,n,m} - \mu^{(\ell)}_{\ell,n}\right] \nonumber\\
&+ \dfrac{4}{(\mu^{(\ell-1)}_{\ell,n} M_{\ell-1})^2} \Bigg\{ M_{\ell-1}\mathbb{V}_{\ell,n}\left[X^{(\ell)}_{\ell,n,m} - X^{(\ell-1)}_{\ell,n,m}\right] \nonumber \\
&+ (M_\ell - M_{\ell-1}) \mathbb{V}_{\ell,n}\left[X^{(\ell)}_{\ell,n,m} - \mu^{(\ell)}_{\ell,n}\right] \Bigg\}.
\end{align}
Now, to estimate $\mathbb{V}_{\ell,n}\left[X^{(\ell)}_{\ell,n,m} - X^{(\ell-1)}_{\ell,n,m}\right] $ above, we consider the difference
\[\mathcal{E} \myeq \left(X^{(\ell)}_{\ell,n,m} - X^{(\ell-1)}_{\ell,n,m}\right)R_{\ell,n,m} = \left(\exp(T_\ell) - \exp(T_{\ell-1})\right)R_{\ell,n,m},\]
where $T_\ell \myeq X_{\ell,n,m}^{(\ell)}$ and similarly $T_{\ell-1} \myeq X^{(\ell-1)}_{\ell,n,m}$. The difference $\mathcal{E}$ can be rewritten as
\begin{equation}\label{eq:diff.exp.ell}
\mathcal{E} = (\exp(T_\ell-T_{\ell-1})-1)\exp(T_{\ell-1})R_{\ell,n,m}.
\end{equation}
Then, by a Taylor expansion $\exp(x) - 1 \sim x$ in a neighborhood of zero, we are led to approximate \eqref{eq:diff.exp.ell} by
\begin{equation}\label{eq:diff.exp.ell.approx}
\mathcal{E} \sim (T_\ell-T_{\ell-1})\exp(T_{\ell-1})R_{\ell,n,m}.
\end{equation}
Thus, the variance of $\mathcal{E}$ can be bounded as
\begin{equation}
\mathbb{V}_{\ell,n}[\mathcal{E}] \lesssim \mathbb{E}[(T_\ell-T_{\ell-1})^2\exp(2T_{\ell-1})R_{\ell,n,m}^2].
\end{equation}
Insert in the difference $T_\ell-T_{\ell-1}$ the definitions of $X^{(\ell)}_{\ell,n,m}$ in \eqref{eq:Xell} and $X^{(\ell)}_{\ell-1,n,m}$ in \eqref{eq:Xellm1}, and we obtain
\begin{equation}
T_\ell-T_{\ell-1} = -\frac{1}{2} \sum_{i=1}^{N_e} \underbrace{\left( \left\| \bm{y}^{(\ell)}_{i,\ell,n} - \bm{g}_{i,\ell}(\bm{\theta}_{\ell,n,m})\right\|^2_{\bm{\Sigma_\epsilon}^{-1}} - \left\| \bm{y}^{(\ell-1)}_{i,\ell,n} - \bm{g}_{i,\ell-1}(\bm{\theta}_{\ell,n,m})\right\|^2_{\bm{\Sigma_\epsilon}^{-1}} \right)}_{F},
\end{equation}
where $F$ can be written as the following inner product in the $\bm{\Sigma_\epsilon}^{-1}$ norm,
\begin{multline}
F=\Bigg(\overbrace{\vphantom{\bm{y}^{(\ell)}_{i,\ell,n}}\bm{g}_{\ell}(\bm{\theta}_{\ell,n}) - \bm{g}_{\ell-1}(\bm{\theta}_{\ell,n}) - (\bm{g}_{\ell}(\bm{\theta}_{\ell,n,m}) - \bm{g}_{\ell-1}(\bm{\theta}_{\ell,n,m}))}^{\ominus \bm{g}_\ell},
\\ \overbrace{\vphantom{\bm{y}^{(\ell)}_{i,\ell,n}}\bm{g}_{\ell}(\bm{\theta}_{\ell,n})+\bm{g}_{\ell-1}(\bm{\theta}_{\ell,n})-(\bm{g}_{\ell}(\bm{\theta}_{\ell,n,m}-\bm{g}_{\ell-1}(\bm{\theta}_{\ell,n,m}))+2\epsilon_{\ell,n}}^{\oplus \bm{g}_\ell} \Bigg)_{\bm{\Sigma_\epsilon}^{-1}}.
\end{multline}
Also, using Cauchy--Schwartz, we bound $F^2$ by
\begin{equation}
F^2 \le \left\| \ominus\bm{g}_\ell \right\|^2_{\bm{\Sigma_\epsilon}^{-1}} \left\| \oplus\bm{g}_\ell \right\|^2_{\bm{\Sigma_\epsilon}^{-1}}.
\end{equation}
Next, to bound the variance of the approximation of $\mathcal{E}$ in \eqref{eq:diff.exp.ell.approx}, for some suitable $0<p,q$ such that $1/p+1/q=1$,
we use H\"{o}lder inequality yielding
\begin{eqnarray}
\label{eq:holder}
\mathbb{V}_{\ell,n}[e] & \lesssim \mathbb{E}_{\ell,n}[\left\| \ominus\bm{g}_\ell \right\|^2_{\bm{\Sigma_\epsilon}^{-1}} \left\| \oplus\bm{g}_\ell \right\|^2_{\bm{\Sigma_\epsilon}^{-1}} \exp(2 T_{\ell-1})R_{\ell,n,m}^2], \nonumber \\
& \lesssim \mathbb{E}_{\ell,n}[\left\| \ominus\bm{g}_\ell \right\|^{2p}_{\bm{\Sigma_\epsilon}^{-1}}]^{1/p} \,
 \mathbb{E}_{\ell,n}[\left\| \oplus\bm{g}_\ell \right\|^{2q}_{\bm{\Sigma_\epsilon}^{-1}} \exp(2q T_{\ell-1})R_{\ell,n,m}^{2q}]^{1/q}, \nonumber \\
& \lesssim \, h_\ell^{2\eta_s},
\end{eqnarray}
and the constants in the inequalities \eqref{eq:holder} are integrable and depends on $\bm{\theta}_{\ell,n}$, $\bm{Y}^{\ell}_{\ell,n}$ and $\bm{Y}^{\ell-1}_{\ell,n}$.
Thus, the variance of the ratio between the evidences in \eqref{eq:evidence.ratio.approx} is bounded by
\begin{align}\label{eq:variance.estimate}
V_{\ell,n}= \mathbb{V}_{\ell,n}\left[\log\left(\dfrac{\hat{p}_\ell(\bm{Y},\bm{\theta};\{ \bm{\theta_{m}} \}_{m=1}^{M_\ell})}{\hat{p}_{\ell-1}(\bm{Y},\bm{\theta};\{ \bm{\theta_{m}} \}_{m=1}^{M_{\ell-1}})}\right)\right] \lessapprox & M_\ell \left(\dfrac{1}{\mu^{(\ell)}_{\ell,n} M_\ell} - \dfrac{1}{\mu^{(\ell-1)}_{\ell,n} M_{\ell-1}}\right)^2 + \dfrac{h^{2\eta_s}_\ell}{\mu^{(\ell-1)}_{\ell,n} M_{\ell-1}} \nonumber \\
& + \dfrac{M_\ell - M_{\ell-1}}{(\mu^{(\ell-1)}_{\ell,n} M_{\ell-1})^2}, \nonumber \\
\lessapprox & \dfrac{1}{\mu^{(\ell)}_{\ell,n}} \left[ M_\ell \left(\dfrac{1}{M_\ell} - \dfrac{1}{M_{\ell-1}}\right)^2 + \dfrac{h^{2\eta_s}_\ell}{M_{\ell-1}} + \dfrac{M_\ell - M_{\ell-1}}{M_{\ell-1}^2} \right].
\end{align}
To conclude on the optimal work, we need to estimate the total variance corresponding to level $\ell$, namely
\[
V_{\ell} = \mathbb{E}\left[\mathbb{V}_{\ell,n}\left[\log\left(\dfrac{\hat{p}_\ell(\bm{Y},\bm{\theta};\{ \bm{\theta_{m}} \}_{M_\ell})}{\hat{p}_{\ell-1}(\bm{Y},\bm{\theta};\{ \bm{\theta_{m}} \}_{M_{\ell-1}})}\right)\right] \right] + \mathbb{V}\left[\mathbb{E}_{\ell,n}\left[\log\left(\dfrac{\hat{p}_\ell(\bm{Y},\bm{\theta};\{ \bm{\theta_{m}} \}_{M_\ell})}{\hat{p}_{\ell-1}(\bm{Y},\bm{\theta};\{ \bm{\theta_{m}} \}_{M_{\ell-1}})}\right)\right] \right].
\]
Combining \eqref{eq:variance.estimate} with
\[
\left|\mathbb{E}_{\ell,n}\left[\log\left(\dfrac{\hat{p}_\ell(\bm{Y},\bm{\theta};\{ \bm{\theta_{m}} \}_{M_\ell})}{\hat{p}_{\ell-1}(\bm{Y},\bm{\theta};\{ \bm{\theta_{m}} \}_{M_{\ell-1}})}\right)\right]\right| \sim \left|\dfrac{\mu^{(\ell)}_{\ell,n}}{\mu^{(\ell-1)}_{\ell,n}}-1\right| \sim h_\ell^{\eta_w}
\]
yields
\[
V_{\ell} \lessapprox  \left[ M_\ell \left(\dfrac{1}{M_\ell} - \dfrac{1}{M_{\ell-1}}\right)^2 + \dfrac{h^{2\eta_s}_\ell}{M_{\ell-1}} + \dfrac{M_\ell - M_{\ell-1}}{M_{\ell-1}^2} \right] + h_\ell^{2\eta_w}.
\]
This concludes the proof.
\end{proof}
Theorem \ref{lem:var} is an important result as it shows the variance decay per level $\ell$, as $M_{\ell} \to \infty$, which will later on be used when estimating the statistical error of the MLDLMC estimator. For comparison purposes, we note the same result for the standard MLMC for estimating the expected value of $\Vert \bm{g} \Vert_{\bm{\Sigma}^{-1}_{\bm{\epsilon}}}$ is given by $V_{\ell} \lessapprox h_{\ell}^{2\eta_s}$ for $\ell>0$ \cite{G2015}. This comparison highlights the extra layer of complexity for multilevel techniques when the quantities of interest are more challenging in structure, such as the nested expectation form of the EIG criterion.
The average computational work of the MLDLMC estimator \eqref{eq:mldl} is modeled as
\begin{equation}
\label{eq:workMLDL}
W(\mathcal{I}_{\text{MLDLMC}}) \propto \sum^{L}_{\ell=0} N_{\ell}M_{\ell}W(\bm{g}_{\ell}),
\end{equation}
where $W(\bm{g}_{\ell})$ is the average work of a single evaluation of $\bm{g}_{\ell}$ and is modeled as $W(\bm{g}_{\ell}) \propto h_{\ell}^{-\gamma}$, see \eqref{eq:gellwork} in Assumption \ref{assump:g}. Note that work model \eqref{eq:workMLDL} exploits that $W(\hat{f}_{\ell}-\hat{f}_{\ell-1}) \propto M_{\ell}W(\bm{g}_{\ell})$.
\subsection{Choice of MLDLMC parameters}
\label{sec:mldlmcparam}
Following the approach in \cite{key56}, we select the values of the MLDLMC parameters, $L$, $\{ M_{\ell} \}^{L}_{\ell=0}$ and $\{ N_{\ell} \}_{\ell=0}^L$, for a random estimator $\mathcal{I}$ (short for $\mathcal{I}_{\text{MLDLMC}}$) that minimizes the average computational work such that the absolute value of the error, $\vert I-\mathcal{I} \vert$, is less than or equal to a desired error tolerance $\hbox{TOL}>0$ with probability $1-\alpha$, i.e.,
\begin{equation}
\label{eq:probframe}
\mathbb{P}\left(\vert I-\mathcal{I} \vert \leq \hbox{TOL} \right) \geq 1-\alpha,
\end{equation}
where $0<\alpha<1$ and, typically, $\alpha \ll 1$. A solution to the above optimization problem can be found by solving the problem below, where we split the total error into a bias component and a statistical error:
\begin{equation*}
  \lvert I - \mathcal{I} \rvert \leq \lvert I - \mathbb{E}\left[\mathcal{I}\right] \rvert + \lvert \mathbb{E} \left[\mathcal{I}\right] - \mathcal{I} \rvert.
\end{equation*}
Then, we minimize the average work such that the constraints
\begin{eqnarray}
\lvert I - \mathbb{E}\left[\mathcal{I}\right] \rvert &\leq& (1-\kappa) \hbox{TOL} \quad \text{and} \label{eq:bias} \\
\lvert \mathbb{E} \left[\mathcal{I}\right] - \mathcal{I} \rvert &\leq& \kappa \hbox{TOL} \label{eq:vareq}
\end{eqnarray}
hold for a balancing parameter $0 < \kappa < 1$. The constraint \eqref{eq:bias} is the bias constraint, and thee second constraint \eqref{eq:vareq} is a statistical constraint. The constraint \eqref{eq:vareq}, imposed on the statistical error, must hold with probability of $1-\alpha$. From a Central Limit Theorem (Theorem 1.1 \cite{HK2018}; Lemma 7.1 \cite{key56}) for normalized MLMC estimators, if $\eta_s>\gamma$, then
\begin{equation*}
\dfrac{\mathcal{I}-I_L}{\sqrt{\mathbb{V}[\mathcal{I}]}} \rightharpoonup \mathcal{N}(0,1), \quad \text{as } \hbox{TOL} \to 0,
\end{equation*}
where $\mathcal{N}(0,1)$ is a standard normal random variable, and $\rightharpoonup$ denotes convergence in distribution. Therefore, the statistical error constraint \eqref{eq:vareq} is approximated by a variance constraint, which is easier to handle numerically, i.e.,
\begin{eqnarray}\label{eq:statconst}
\mathbb{V}\left[\mathcal{I}\right] \leq \left( \frac{\kappa \hbox{TOL}}{C_\alpha}\right)^2,
\end{eqnarray}
where $C_\alpha = \Phi^{-1}(1-\frac{\alpha}{2})$ and $\Phi^{-1}(\cdot)$ is the inverse cumulative distribution function (cdf) of the standard normal distribution.
By using the results for the bias, variance and work given in Section \ref{sec:biasvarwork}, we can state an optimization problem for finding the method parameters of MLDLMC subject to \eqref{eq:probframe}:
\begin{eqnarray*}
 \argmin_{(N_{\ell},M_{\ell},L,\kappa)} && \sum^L_{\ell=0} N_{\ell}M_{\ell}h_{\ell}^{-\gamma} \\
\hbox{subject to}\;\;\; && \left\{
\begin{array}{ll}
C_{2}h_L^{\eta_w}+C_{1}M_{L}^{-1} \leq (1-\kappa)\hbox{TOL}\\
\frac{V_0}{N_0}+\sum^L_{\ell=0} V_{\ell}N_\ell^{-1} \leq \left(\kappa \hbox{TOL}/C_\alpha\right)^2
\end{array}
\right.
\end{eqnarray*}
It is challenging to find a closed-form solution to the above optimization problem, and thus we will determine the level $L$ and $M_L$ directly from the bias constraint as follows: Let us split bias constraint into two bias constraints:
\begin{eqnarray*}
C_{2}h_L^{\eta_w} & \le & \frac{1}{2}(1-\kappa)\hbox{TOL}, \ \text{and} \\
C_{1}M_{L}^{-1} & \le & \frac{1}{2}(1-\kappa)\hbox{TOL}.
\end{eqnarray*}
By fixing $h_{\ell}=h_0\beta^{-\ell}$ as in \eqref{eq:hell}, we obtain the following values, denoted by $\kappa^*$ and $L^*$, for the balancing parameter $\kappa$ and the highest level $L$, respectively,
\begin{equation}
\label{eq:Lstar}
L^* = \left\lceil \eta_w^{-1}\left(\log_{\beta}\left( 2C_2h_0^{\eta_w}\right) + \log_{\beta}\left( \hbox{TOL}^{-1} \right) \right)\right\rceil,
\end{equation}
and 
\begin{equation}
\label{eq:kappastar}
\kappa^* = 1-C_2h_{L^*}^{\eta_w}\hbox{TOL}^{-1}.
\end{equation}
The choice of $M_{L}$, denoted by $M^*_{L}$, is given as
\begin{equation}
\label{eq:MLstar}
M^*_L = \left\lceil \frac{C_1}{1-\kappa^*}\hbox{TOL}^{-1} \right\rceil.
\end{equation}
Next, for $\ell = 0,\dots,L-1$ let the choice of $M_{\ell}$, denoted by $M^*_{\ell}$, be given by
\begin{equation}
\label{eq:Massump}
M^*_{\ell}=M^*_{L},
\end{equation}
which results in simplifying cancellations in the variance estimate \eqref{eq:variance.estimate}, leading to, for $\ell>0$,
\begin{equation}
V_{\ell} \lessapprox \mathcal{C}\dfrac{h^{2\eta_s}_\ell}{M_{L}} + h_\ell^{2\eta_w},
\end{equation}
for some $\mathcal{C}>0$. Given $\kappa^*$, $M^*_\ell$, and $L^*$, the optimal number of outer samples $N_{\ell}$ is the solution of
\begin{eqnarray*}
 N_{\ell}^*=\argmin_{N_{\ell}} && \sum^{L^*}_{\ell=0} N_{\ell}M^*_{\ell}h_{\ell}^{-\gamma} \\
\hbox{subject to}\;&& \frac{V_0}{N_0}+\sum^{L^*}_{\ell=0} V_{\ell}N_\ell^{-1} \leq \left(\kappa^* \hbox{TOL}/C_\alpha\right)^2
\end{eqnarray*}
which is given by
\begin{equation}
\label{eq:Nel}
N_{\ell}^*=\left\lceil \left( \frac{C_\alpha}{\kappa^*\hbox{TOL}} \right)^2 \sqrt{\frac{V_\ell}{M^*_{\ell}W(\bm{g}_{\ell})}}\left(\sum^{L^*}_{\ell=0} \sqrt{V_{\ell}M^*_{\ell}W(\bm{g}_{\ell})}\right) \right\rceil,
\end{equation}
as shown for standard MLMC in \cite{G2015}; the ceiling of the optimal solution is to ensure $N_{\ell}^{*}$ is a positive integer.
In conclusion, the proposed MLDLMC is given by \eqref{eq:mldl} with number of outer samples $\{ N^*_{\ell} \}_{\ell=0}^{L}$ and number of inner samples $\{ M^*_{\ell} \}_{\ell=0}^{L}$ for level $L=L^*$ such that \eqref{eq:probframe} is satisfied as $\hbox{TOL} \to 0$, i.e., for some desired error tolerance $\hbox{TOL}>0$, the random estimator is designed to satisfy
\begin{equation*}
\mathbb{P}\left(\vert I-\mathcal{I} \vert \leq \hbox{TOL} \right) \geq 1-\alpha,
\end{equation*}
with probability $1-\alpha$, where $0<\alpha \ll 1$.
The average work of the proposed MLDLMC can be bounded from above as follows:
\begin{equation}
\label{eq:workmldlmc}
W(\mathcal{I}_{\text{MLDLMC}}) \lessapprox \left( \frac{C_\alpha}{\kappa^*\hbox{TOL}} \right)^2 \left(\sum^{L^*}_{\ell=0} \sqrt{V_{\ell}M^*_{\ell}W(\bm{g}_{\ell})}\right)^2+\sum^{L^*}_{\ell=0} M^*_{\ell}W(\bm{g}_{\ell}).
\end{equation}
Also, an alternative to \eqref{eq:Massump}, namely to use $M^*_{\ell}=M_L$ for all $\ell$, is to numerically determine $\{ M_{\ell}^* \}^{L^*-1}_{\ell=0}$ by minimizing the upper bound of the work, \eqref{eq:workmldlmc}, with respect to $\{ M_{\ell}^* \}^{L^*-1}_{\ell=0}$.

\subsection{Computational work discussion}
\label{sec:compdiscuss}
There is a connection between the average work of the proposed MLDLMC (given in Section \ref{sec:mldlmcparam}) and that of the standard MLMC. The upper bound of the work of the MLDLMC with $L+1$ levels can be bounded from above as follows:
\begin{eqnarray}
W(\mathcal{I}_{\text{MLDLMC}}) &\propto& \sum^{L}_{\ell=0} N^*_{\ell}M^*_{\ell}h_{\ell}^{-\gamma} \nonumber \\
&\le& \left( \frac{C_\alpha}{\kappa\hbox{TOL}} \right)^2 \left(\sum^{L}_{\ell=0} \sqrt{V_{\ell}M^*_{L}W(\bm{g}_{\ell})}\right)^2+M^*_{L}\sum^{L}_{\ell=0}W(\bm{g}_{\ell}) \nonumber \\
&\lessapprox& \left( \frac{C_\alpha}{\kappa\hbox{TOL}} \right)^2 \left(\sum^{L}_{\ell=0} \sqrt{V_{\ell}M^*_{L}W(\bm{g}_{\ell})}\right)^2 \nonumber \\
&\lessapprox& \left( \frac{C_\alpha}{\kappa\hbox{TOL}} \right)^2 \left(\sqrt{M^*_LW(\bm{g}_0)}+\sum^{L}_{\ell=1} \sqrt{(h^{2\eta_s}_\ell +\mathcal{C}M^*_{L}h_\ell^{2\eta_w})W(\bm{g}_{\ell})}\right)^2 \nonumber \\
&\lessapprox& \left( \frac{C_\alpha}{\kappa\hbox{TOL}} \right)^2 \left(\sum^{L}_{\ell=1} \sqrt{h^{2\eta_s-\gamma}_\ell + \mathcal{C}\hbox{TOL}^{-1}h_\ell^{2\eta_w-\gamma}}\right)^2.\label{eq:optwork}
\end{eqnarray}
The constant $\mathcal{C}>0$ typically satisfies $\mathcal{C} \ll 1$ due to the Laplace-based importance sampling, and from the upper bound of the work we can notice that total work of the proposed MLDLMC will behave as standard MLMC as long as $\hbox{TOL}$ is large enough such that $\mathcal{C}\hbox{TOL}^{-1} \ll 1$. The asymptotic work complexity of the standard MLMC with respect to $\hbox{TOL}$ can be found in Theorem 2.1 \cite{G2015}. Note that asymptotically as $\hbox{TOL} \to 0$, we see from the work bound \eqref{eq:optwork} that MLDLMC exhibits an asymptotically worse complexity compared to that of MLMC, because of the additional term $\mathcal{C}\hbox{TOL}^{-1}h_\ell^{2\eta_w-\gamma}$ in \eqref{eq:optwork}, as expected due to the nested expectation of the EIG criterion. For the numerical example in Section \ref{sec:eitnum}, we observe that the computational work of the proposed MLDLMC \eqref{eq:mldl} follows $\hbox{TOL}^{-2}$ up to some logarithmic factor over a reasonable range of $\hbox{TOL}$, which is the optimal work rate of the standard MLMC under Assumption \ref{assump:g}. For cases with $2\eta_s>\gamma$, we thus expect that the work of MLDLMC follows the optimal rate of $\hbox{TOL}^{-2}$, for ranges of $\hbox{TOL}$ satisfying $\mathcal{C}\hbox{TOL}^{-1} \ll 1$, which could be satisfied thanks to the efficiency of the Laplace-based importance sampling.

\section{Multilevel Double Loop Stochastic Collocation (MLDLSC}
\label{sec:mldlscsec}
As an alternative to methods based on MC sampling, we propose a Multilevel Double Loop Stochastic Collocation (MLDLSC) method, based on the Multi-Index Stochastic Collocation (MISC) algorithm \cite{ali2016b,ali2016}, which exploits the regularity of the dependence on the random input variables. The idea is to compute the telescopic sum differences, i.e., expectations, in the multilevel estimator \eqref{eq:telescop}, by stochastic collocation, which is a high-dimensional integration over the probability space achieved by deterministic quadrature on sparse grids, see, e.g., \cite{BNT2010,BNTT2011}.

\subsection{Multilevel stochastic collocation}
\label{sec:mlsc}
We start by defining a quadrature operator for a one-dimensional real-valued continuous function $u: \Gamma_i \to \mathbb{R}$, where $\Gamma_i=[-1,1]$ is any of the univariate sub-domains $\Gamma_1,\ldots,\Gamma_d$ composing the complete $d$-dimensional domain $\bm{\Gamma} \myeq \Gamma_1 \times \cdots \times \Gamma_d$. The quadrature operator is defined as
\begin{equation}
\mathcal{Q}^{m(\beta)}: C^0(\Gamma_i) \to \mathbb{R}, \quad \mathcal{Q}^{m(\beta)}[u]=\sum^{m(\beta)}_{j=1} u(z_{\beta,j})\omega_{\beta,j},
\end{equation}
where $\beta$ is a positive integer specifying the ``level'' of the quadrature operator, $m(\beta)$ a strictly increasing function giving the number of distinct collocation points, $\{ z_{\beta,j} \}_{j=1}^{m(\beta)}$, and $z_{\beta,j} \in \Gamma_i$ with corresponding weights $\{ \omega_{\beta,j} \}^{m(\beta)}_{j=1}$. The collocation points are chosen according to the underlying probability distribution; see \cite{XK2002}. For the uniform probability distribution, we adopt the Clenshaw-Curtis family of points and weights, which has the desired property of being \emph{nested}. The distribution of points is given by
\begin{equation*}
z_{\beta,j}=\cos\left( \dfrac{(j-1)\pi}{m(\beta)-1} \right), \quad 1 \le j \le m(\beta),
\end{equation*}
where the function $m(\beta)$ is defined as $m(\beta)=2^{\beta-1}+1$ for $\beta \ge 2$, where $m(0)=0$, $m(1)=1$.

The generalization to high-dimensional real-valued continuous functions $u: \bm{\Gamma} \to \mathbb{R}$ is obtained by introducing a quadrature operator that is a tensorization of the one-dimensional quadrature operators, i.e.,
\begin{equation*}
\mathcal{Q}^{\bm{m}(\bm{\beta})}: C^{0}(\bm{\Gamma}) \to \mathbb{R}, \quad \mathcal{Q}^{\bm{m}(\bm{\beta})} = \bigotimes_{1 \le i \le d} \mathcal{Q}^{m_i(\beta_i)}, \quad \mathcal{Q}^{\bm{m}(\bm{\beta})}[u]=\sum^{\# \bm{m}(\bm{\beta})}_{j=1} u(\bm{z}_j)\omega_j,
\end{equation*}
where $\bm{z}_j$ are the points on the tensor grid $\bigotimes_{1 \le i \le d} \{z_{\beta_i,j}\}^{m_i(\beta_i)}_{j=1}$, $\omega_j$ are the products of the weights imposed by the one-dimensional quadrature rules, $m_i(\bm{\beta})$ is the function giving the number of collocation points for input direction $i$, and $\# \bm{m}(\bm{\beta})$ denotes the total number of collocation points on the full grid for a multi-index $\bm{\beta}$, i.e., $\# \bm{m}(\bm{\beta}) \myeq \prod^{d}_{i=1} m_i(\beta_i)$. A hierarchy of the anisotropic full-tensor approximations can be constructed by selecting $\bm{\beta} \in \mathbb{N}^d$ such that
\begin{equation*}
\lceil s_i\beta_i \rceil=w,
\end{equation*}
for the sequence of approximation levels $w \in \mathbb{N}$, and where $s_i$ is a user-specified importance weight for input direction $i$. This is known as the \emph{total product} (TP) approximation. However, this leads to the total number of collocation points growing exponentially as $w$ increases. To mitigate the \emph{curse of dimensionality}, we adopt a \emph{sparsification} technique, known as sparse grid stochastic collocation (SGSC), see, e.g., \cite{BNT2010,BNTT2011}.

The TP approximation for integration is denoted by $U_{\bm{\beta}} \myeq \mathcal{Q}^{\bm{m}(\bm{\beta})}[u]$. The SC quadrature uses the difference operator, $\Delta_i$, and is given for $1 \le i \le d$ by
\begin{align}
  \label{eq:deltai}
   \Delta_i[U_{\bm{\beta}}] & \myeq
    \begin{cases}
      U_{\bm{\beta}}-U_{\bm{\beta}-\bm{e}_i}, & \text{if $\beta_i>1$}\\
      U_{\bm{\beta}}, & \text{if $\beta_i=1$},
    \end{cases}
\end{align}
where $(\bm{e}_i)_{k}=1$ if $i=k$, and zero otherwise. The sparse-grid stochastic collocation quadrature can be formulated as
\begin{equation}
\mathcal{I}_{\text{SGSC}}=\sum_{\bm{\beta} \in \Lambda} \bm{\Delta}[U_{\bm{\beta}}]=\sum_{\bm{\beta} \in \Lambda} \sum_{\substack{\bm{j} \in \{0,1\}^{d} \\ \bm{\beta}+\bm{j} \in \Lambda}} (-1)^{\vert \bm{j} \vert} U_{\bm{\beta}},
\end{equation}
for some multi-index set $\Lambda \subset \mathbb{N}^{d}$, and the mixed-difference operator is given by
\begin{eqnarray*}
\bm{\Delta}[U_{\bm{\beta}}] & \myeq & \bigotimes_{1 \le i \le d} \Delta_i[U_{\bm{\beta}}] \myeq \Delta_1 \left[ \Delta_2 \left[ \ldots \Delta_d[U_{\bm{\beta}}] \right] \right] \\
\quad & = & \sum_{\bm{j} \in \{0,1\}^{d}} (-1)^{\vert \bm{j} \vert}U_{\bm{\beta}-\bm{j}}.
\end{eqnarray*}

Now, we consider a case in which $u$ is numerically approximated by $u_{\ell}$ at a discretization level $\ell$ defined by the mesh-element size $h_{\ell}$. Therefore, the complete sparse hierarchy can thus be specified by $\ell$ in the physical space and by $\bm{\beta}$ in the probability space, which leads us to the MLSC estimator of $\mathbb{E}[u]$, given by
\begin{equation}
\label{eq:mlsc}
\mathcal{I}_{\text{MLSC}} \myeq \sum_{[\ell,\bm{\beta}] \in \Lambda} \bm{\Delta}^{\text{mix}}[U_{\ell,\bm{\beta}}]=\sum_{[\ell,\bm{\beta}] \in \Lambda} \sum_{\substack{\bm{j} \in \{0,1\}^{d+1}\\ [\ell,\bm{\beta}]+\bm{j} \in \Lambda}} (-1)^{\vert \bm{j} \vert} U_{\ell,\bm{\beta}},
\end{equation}
where $\Lambda \subset \mathbb{N}^{d+1}$, $U_{\ell,\bm{\beta}} \myeq \mathcal{Q}^{\bm{m}(\bm{\beta})}[u_{\ell}]$, and
\begin{align}
  \label{eq:mixdelta}
   \Delta^{\text{mix}}[U_{\ell,\bm{\beta}}] & \myeq
    \begin{cases}
      \bm{\Delta}[U_{\ell,\bm{\beta}}-U_{\ell-1,\bm{\beta}}], & \text{if $\ell>0$}\\
      \bm{\Delta}[U_{\ell,\bm{\beta}}], & \text{if $\ell=0$}.
    \end{cases}
\end{align}
We evaluate MLSC by computing the full-tensor approximations $U_{\ell,\bm{\beta}}$ independently, and combining them linearly according to the combination technique \eqref{eq:mlsc}. Of course, the effectiveness depends on the choice of the multi-index set $\Lambda$. The idea behind the \emph{sparse} construction is that $\Lambda$ should be chosen to exclude ``expensive'' isotropic full-tensor
approximations from the estimate, by refining only a subset of the physical or probability directions simultaneously. Then, we combine these approximations using the combination-technique formula \eqref{eq:mlsc} to create a more accurate approximation. Various approaches have been proposed for selecting the multi-index set $\Lambda$, such as using the classical sets given in \cite{BNT2010} or selecting the set adaptively as discussed in \cite{BTT2018,GG2003,NTTT2016,SS2013}.

\subsection{Multilevel Double Loop Stochastic Collocation (MLDLSC) estimator}
\label{sec:miscis}

Here, we recast EIG \eqref{expecinfgain} into an integration with respect to $\Epsilon$ instead of $\bm{Y}$:
\begin{eqnarray}
\label{eq:eigepsilon}
I&=&\int_{\Theta} \int_{\mathcal{E}} \log{\left(\dfrac{\pi_\epsilon(\Epsilon)}{\int_{\Theta} p(\bm{G}(\bm{\theta})+\Epsilon \vert \tilde{\bm{\theta}})\pi(\tilde{\bm{\theta}})d\tilde{\bm{\theta}}}\right)} \pi_\epsilon(\Epsilon) \pi(\bm{\theta}) \,d\Epsilon \,d\bm{\theta} \\
&=& \int_{\Theta} \int_{\mathcal{E}} \log{\left(\dfrac{\pi_\epsilon(\Epsilon)}{\int_{\Theta} p(\bm{G}(\bm{\theta})+\Epsilon \vert \tilde{\bm{\theta}})R(\tilde{\bm{\theta}};\bm{G}(\bm{\theta})+\Epsilon)\tilde{\pi}(\tilde{\bm{\theta}} \vert \bm{G}(\bm{\theta})+\Epsilon) d\tilde{\bm{\theta}}}\right)} \pi_\epsilon(\Epsilon) \pi(\bm{\theta}) \,d\Epsilon \,d\bm{\theta},
\end{eqnarray}
where $\pi_\epsilon(\Epsilon)=\prod^{d}_{i=1}\pi_\epsilon(\bm{\epsilon}_i)$, and the likelihood ratio is $R(\tilde{\bm{\theta}};\bm{G}(\bm{\theta})+\Epsilon)=\pi(\tilde{\bm{\theta}})/\tilde{\pi}(\tilde{\bm{\theta}} \vert \bm{G}(\bm{\theta})+\Epsilon)$ as defined in \eqref{eq:Rell}, the importance sampling distribution is $\tilde{\pi}(\tilde{\bm{\theta}} \vert \bm{G}(\bm{\theta})+\Epsilon) \sim \mathcal{N}(\hat{\bm{\theta}}(\bm{G}(\bm{\theta})+\Epsilon),\bm{\Sigma}(\bm{G}(\bm{\theta})+\Epsilon))$ as defined in \eqref{eq:istilde} with the MAP estimate, $\hat{\bm{\theta}}(\bm{G}(\bm{\theta})+\Epsilon)$, as given in \eqref{fittheta}, and the approximate covariance is $\bm{\Sigma}(\bm{G}(\bm{\theta})+\Epsilon)$ as given in \eqref{eq:sigmahat}. We introduce the auxiliary function,
\begin{equation*}
\Psi_{\ell}(\tilde{\bm{\theta}};\bm{G}_{\ell}(\bm{\theta})+\Epsilon) \myeq p_{\ell}(\bm{G}(\bm{\theta})+\Epsilon \vert \tilde{\bm{\theta}})R_{\ell}(\tilde{\bm{\theta}};\bm{G}_{\ell}(\bm{\theta})+\Epsilon).
\end{equation*}
Furthermore, we let $\bm{\beta}=\left(\bm{\beta}_1,\bm{\beta}_2\right)$ and $\bm{m}_{\bm{\beta}}=\left(\bm{m}_{\bm{\beta}_1},\bm{m}_{\bm{\beta}_2}\right)$, where $\bm{\beta}_1$,$\bm{\beta}_2$ are multi-indices associated with the random variables of the outer $(\bm{\theta},\bm{\epsilon}_i)$ and inner ($\bm{\tilde{\theta}}$) integrals, respectively. The proposed MLDLSC estimator for approximating the EIG in the form given in \eqref{eq:eigepsilon} is
\begin{equation}
\label{eq:mldlsc}
\mathcal{I}_{\text{MLDLSC}} \myeq \sum_{[l,\bm{\beta}] \in \Lambda} \bm{\Delta}\left[ F_{\ell,\bm{\beta}} \right]=\sum_{[\ell,\bm{\beta}] \in \Lambda} \sum_{\substack{\bm{j} \in \{0,1\}^{d+1}\\ [\ell,\bm{\beta}]+\bm{j} \in \Lambda}} (-1)^{\vert \bm{j} \vert} F_{\ell,\bm{\beta}},
\end{equation}
where $F_{\ell,\bm{\beta}} \myeq \mathcal{Q}^{\bm{m}_{\bm{\beta}_1}}\left[ \tilde{f}_{\ell,\bm{\beta}_2} \right]$, $\Lambda \subset \mathbb{N}^{d+1}$ is the multi-index set, and
\begin{equation}
\tilde{f}_{\ell,\bm{\beta}_2}(\Epsilon,\bm{\theta}) \myeq \log{\left( \dfrac{\pi_\epsilon(\Epsilon)}{\mathcal{Q}^{\bm{m}_{\bm{\beta}_2}}[\Psi_{\ell}]} \right)}.
\end{equation}

The natural choice of collocation points and weights are Gauss-Hermite for a random variable $\bm{\epsilon}_i \sim \mathcal{N}(\bm{0},\bm{\Sigma}_{\bm{\epsilon}_i})$. If the covariance, $\bm{\Sigma}$, is a positive-definite non-diagonal matrix, then we can transform the standard $\mathcal{N}(\bm{0},\bm{1})$ Gauss-Hermite points, here denoted by $\bm{z}_{\mathcal{N}(\bm{0},\bm{1})}$, to $\mathcal{N}(\bm{\mu},\bm{\Sigma})$ Gauss-Hermite points, denoted by $\bm{z}$, by following two steps:
\begin{eqnarray}
\bm{\Sigma} &=& \bm{L}\bm{L}^{T} \\
\bm{z} &=& \bm{L}\bm{z}_{\mathcal{N}(0,1)}+\bm{\mu},
\end{eqnarray}
where $\bm{L}$ is a left triangular matrix from the Cholesky decomposition. Similarly, the Gauss-Hermite points can be calculated from the standard Gauss-Hermite points for the random variables $\tilde{\bm{\theta}}$ follow the the importance sampling PDF, $\tilde{\pi}_{\ell}$. An efficient importance sampling measure for the inner expectation of the stochastic collocation approach is necessary to avoid numerical underflow and, as demonstrated in \cite{BDELT2018}, the Laplace-based importance sampling is adequate. The collocation points and weights for $\bm{\theta}$ are chosen with respect to $\pi(\bm{\theta})$.

\section{Optimal electrodes placement in electrical impedance tomography}
\label{sec:eitnum}
We apply the methods, MLDLMC and MLDLSC, proposed in this work, and the optimized DLMC method with Laplace-based importance sampling (DLMCIS) in \cite{BDELT2018}, for estimating the expected information gain for an electrical impedance tomography (EIT) experiment. EIT is an technique for imaging the interior conductivity of a closed body based on the voltage measurements from electrodes placed on the body's free-surface. In this experiment, low-frequency electrical currents are injected through electrodes attached to a composite laminate material made of four orthotropic plies. The potential field in the body of the material is considered quasi-static for a given conductivity.

\subsection{Configuration of the experiment}
The complete electrode model (CEM) \cite{somersalo} is used to formulate the problem. The composite body $D$, with boundary $\partial D$, is formed of $N_p$ plies, i.e. $D = \displaystyle{\cup^{N_p}_{k=1} D_k}$. The configuration is such that the plies overlap with their fibers facing different directions. The upper and lower surfaces of the boundary $\partial D$ are equipped with $N_{_{\tiny{\hbox{el}}}}$ square-shaped electrodes $E_l$, $l=1,\cdots,N_{_{\tiny{\hbox{el}}}}$, with dimensions $e_{el}$ and a surface impedance of $z_l$, as illustrated in Figure \ref{fg:setup}.
\begin{figure}[H]
\centering
\includegraphics[width=0.55\textwidth]{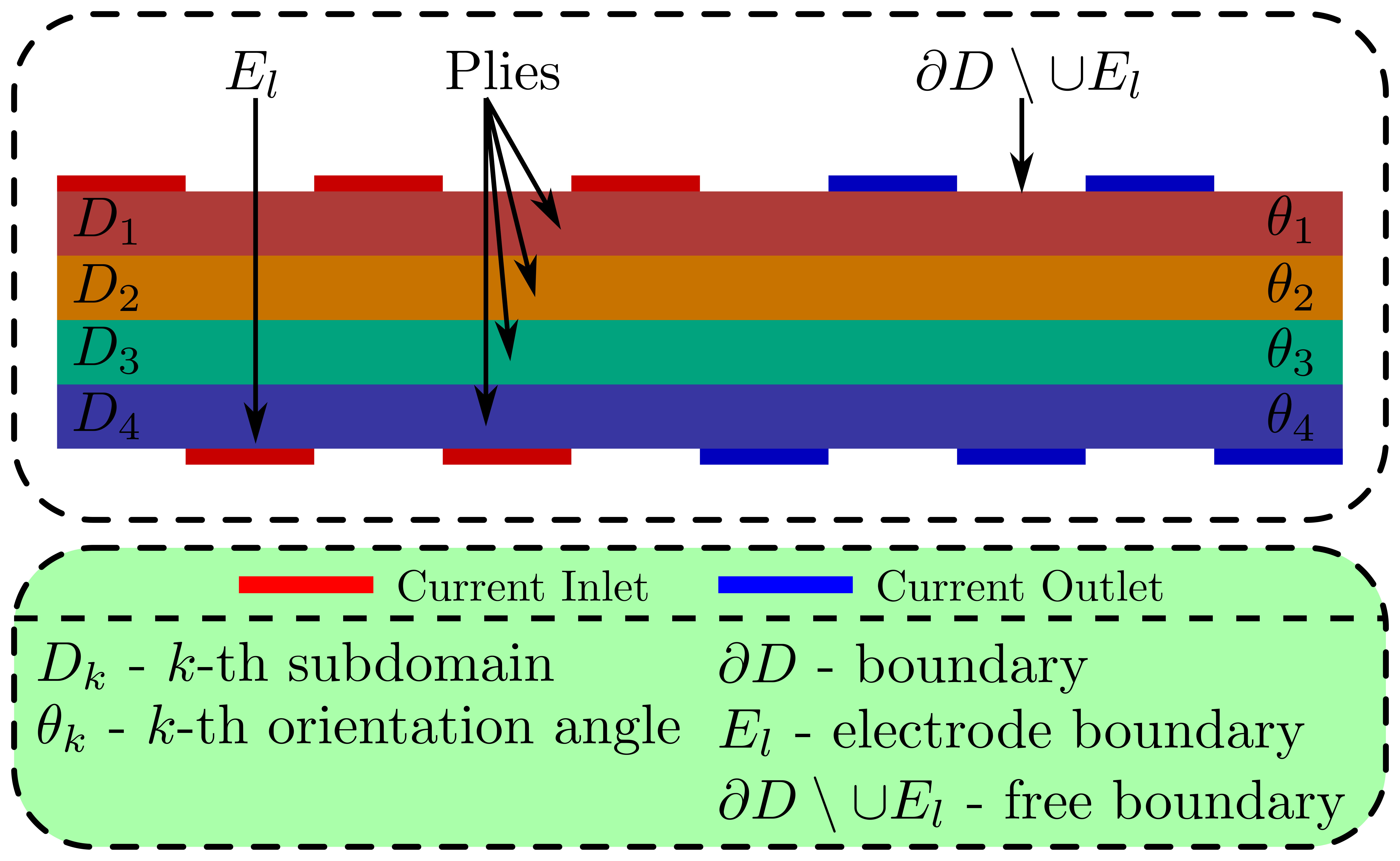}
\caption{(Color online) Experimental set-up (cross section) of a composite plate to be examined made up of four plies with the same thickness. Five electrodes are placed on the top surface and five on the bottom surface. Red (blue) electrodes represent the current inlet (outlet). The current at the electrodes are known while the potential field on the body $\mathcal{D}$ and the potential at the electrodes $E_l$ are unknowns.}
\label{fg:setup}
\end{figure}
The potential field $u$ obeys
\begin{eqnarray}
\label{eq_eit1}
\nabla \cdot \bm{\jmath}(\omega, \bm{x}) \!\!\! & = & \!\!\! 0,\;\;\; \hbox{in}\;\;\; \mathcal{D}, \text{and} \\
\bm{\jmath}(\omega, \bm{x}) \!\!\! & = & \!\!\! \bm{\bar{\sigma}}(\omega,\bm{x}) \cdot \nabla u(\omega,\bm{x}),
\label{eq_eit2}
\end{eqnarray}
where $\bm{\jmath}$ is the flux of electric current, and $\bm{\bar{\sigma}}$ is the conductivity field and is given by
\begin{eqnarray*}
\bm{\bar{\sigma}}(\omega,\bm{x}) = \bm{Q}^T(\theta_k(\omega)) \cdot \bm{\sigma} \cdot \bm{Q}(\theta_k(\omega)),\;\;\;\hbox{for}\;\;\; \bm{x} \in D_k,\;\; k=1,\cdots,N_{_p}.
\end{eqnarray*}
The CEM is a set of boundary conditions for \eqref{eq_eit1}-\eqref{eq_eit2} given by
\begin{eqnarray}
\label{eq_cem1}
\left\{
\begin{array}{lll}
\displaystyle{\bm{\jmath} \cdot \bm{n} = 0,\;\;\; \text{on}\;\;\; \partial D \backslash \left( \cup E_l\right),}\\
\displaystyle{\int _{E_l} \bm{\jmath} \cdot \bm{n} \, d \bm{x} = I_l \;\;\;  \text{on} \;\;\;  E_l, \;\;\; l= 1,\cdots,N_{_{\tiny{\hbox{el}}}},}\\
\displaystyle{\frac{1}{E_l}\int_{E_l} u \,d\bm{x} + z_l \int_{E_l}\bm{\jmath} \cdot \bm{n} \,d\bm{x}  = U_l \;\;\;\hbox{on} \;\;\;  E_l , \;\;\; l= 1,\cdots,N_{_{\tiny{\hbox{el}}}}},
\end{array}
\right.
\end{eqnarray}
where $\bm{n}$ represents the outward normal unit vector. To obtain well-posedness (existence and uniqueness of $(u, \bm{U})$), the Kirchhoff law of charge conservation and the ground potential condition,
\begin{eqnarray}
\label{eq_constr}
\displaystyle{ \sum_{l=1}^{N_{_{\tiny{\hbox{el}}}}} I_l = 0} \;\;\; \hbox{and}\;\;\;
\displaystyle{ \sum_{l=1}^{N_{_{\tiny{\hbox{el}}}}} U_l = 0,}
\end{eqnarray}
respectively, are constraints.
The orthogonal matrix $\bm{Q}(\theta_k)$ is a rotational matrix that defines the orientation of the fibers, in ply $k$ at a given angle $\theta_k$, while $\bm{\sigma}$ stands for the orthotropic conductivity, i.e.,
\begin{eqnarray*}
\bm{Q}(\theta_k) =\begin{bmatrix}
\cos(\theta_k)  &     0      &      -\sin(\theta_k) \\
0                       &     1     &        0 \\
\sin(\theta_k)   &     0     &        \cos(\theta_k)
\end{bmatrix}
\;\;\;\hbox{and}\;\;\;
\bm{\sigma} =\begin{bmatrix}
\sigma_{1}      &       0               &       0 \\
0                     &  \sigma_{2}     & 0 \\
0                   &         0               & \sigma_{3}
\end{bmatrix}.
\end{eqnarray*}

In the rest of the paper, the EIT model refers to \eqref{eq_eit1}, \eqref{eq_eit2}, \eqref{eq_cem1}, and \eqref{eq_constr}. The conductivity  $\bm{\bar{\sigma}}$ is random and assumed to be a uniformly and strictly positive element of $L^\infty(\Omega\times D)$ in order to guarantee ellipticity. The vectors $\bm{I} = \left( I_1,I_2,\cdots,I_{N_{_{\tiny{\hbox{el}}}}} \right)$, and $\bm{U} = \left( U_1,U_2,\cdots,U_{N_{_{\tiny{\hbox{el}}}}} \right)$ respectively determine the vector of the injected (deterministic) current and the vector measurement of the (random) potential at the electrodes. According to the constraints in \eqref{eq_constr}, $\bm{I}$ belongs to the mean-free subspace $\mathbb{R}^{N_{_{\tiny{\hbox{el}}}}}_{_{\tiny{\hbox{free}}}}$ of $\mathbb{R}^{N_{_{\tiny{\hbox{el}}}}}$ and $\bm{U}$ is an element of $\mathbb{R}^{N_{_{\tiny{\hbox{el}}}}}_{_{\tiny{\hbox{free}}}}$. In solving \eqref{eq_eit1} with the consitutive relation \eqref{eq_eit2} subject to the second condition of \eqref{eq_cem1} (i.e., the assigned current at the electrodes) and \eqref{eq_constr}. The unknowns are represented by the pair of potential field on $\mathcal{D}$ and potential at the electrodes $E_l$, that is, $\left(u,\bm{U}\right)$.

\subsection{Experimental design formulation}
\label{sec:designform}
Ten electrodes are placed on the composite laminate body of four ($N_p=4$) plies, with five placed on the top ply and five on the bottom ply, to measure the electrical potential $U_l$ at the electrodes. The parameters of the four plies are $\sigma_{11}=0.05$, $\sigma_{22}=\sigma_{33}=10^{-3}$, and $z_l=0.1$. In Figure \ref{fg:setup}, the red-filled rectangles on the plies represent the electrodes where current is injected and the blue ones represent the electrodes where the current exits. The inlet and outlet currents are in absolute value equal to $0.1$. The orientations of the angles $\theta_1$, $\theta_2$, $\theta_3$, and $\theta_4$ of the fibers are the uncertain parameters targeted for the statistical inference, i.e., the experimental goal, and we consider the following uniform distributions to describe our prior knowledge:
\begin{align*}
\pi(\theta_1)\sim\mathcal{U}\left(\frac{\pi}{3}-0.05,\frac{\pi}{3}+0.05\right), \quad &
\pi(\theta_2)\sim\mathcal{U}\left(\frac{\pi}{4}-0.05,\frac{\pi}{4}+0.05\right),&\\
 \pi(\theta_3)\sim\mathcal{U}\left(\frac{\pi}{5}-0.05,\frac{\pi}{5}+0.05\right),\quad &
 \pi(\theta_4)\sim\mathcal{U}\left(\frac{\pi}{6}-0.05,\frac{\pi}{6}+0.05\right).&
\end{align*}

For the Bayesian experimental design problem, the data model (cf. \eqref{eq_datamodel}) is given by
\begin{eqnarray}
\bm{y}_i = \bm{U}_h(\bm{\theta}) +\bm{\epsilon}_i, \quad \text{for} \quad i = 1, \cdots, N_e \,,
\end{eqnarray}
where $\bm{y}_i \in \mathbb{R}^{N_{_{\tiny{\hbox{el}}}}-1}$, i.e., $q=N_{_{\tiny{\hbox{el}}}}-1$, and the error distribution is Gaussian, i.e., $\bm{\epsilon}_i \sim \mathcal{N}(0,10^{-4}\bm{1}_{q \times q})$. No repeated experiments are considered, i.e., $N_e=1$. The vector $\bm{\theta}=(\theta_{1},\theta_{2},\theta_{3},\theta_{4})$ represents the unknown orientation angles that we want to know, $\bm{U}_h = (U_1, \cdots,U_{N_{_{\tiny{\hbox{el}}}}-1})$ is a finite elements approximation, in the Galerkin sense, of $\bm{U}$ from the variational problem of finding $\left(u,\bm{U}\right) \in  L^2_{\mathbb{P}} \left(\Omega;\mathcal{H} \right)$ such that
\begin{eqnarray}\label{eq:pde}
\mathbb{E} \left[B\left((u,\bm{U}),(v,\bm{V})\right) \right] = \bm{I}_e \cdot \mathbb{E} \left[ \bm{U} \right], \;\;\;\;\hbox{for all}\;\;\;(v,\bm{V}) \in L^2_{\mathbb{P}} \left(\Omega;\mathcal{H} \right),
\end{eqnarray}
and where, for any event $\omega \in \Omega$, the bilinear form $B:\mathcal{H}\times\mathcal{H} \rightarrow \mathbb{R}$ is
\begin{eqnarray}
B\left((u,\bm{U}),(v,\bm{V})\right) = \int_D \bm{\jmath} \cdot \nabla v d D + \sum_{l=1}^{N_{_{\tiny{\hbox{el}}}}} \frac{1}{z_l} \int_{E_l} \left(U_l -u \right) \left(V_l - v \right) d E_l,
\end{eqnarray}
where $(\Omega,\mathcal{F},\mathbb{P})$ stands for the complete probability space, $\mathcal{F}$ is the $\sigma$-field of events, $\mathbb{P} : \mathcal{F} \rightarrow [0,1]$ is the probability measure, and $\Omega$ is the set of outcomes. The space of the solution for the potential field $(u(\omega),\bm{U}(\omega))$ is  $\mathcal{H} \myeq H^1(D) \times \mathbb{R}^{N_{_{\tiny{\hbox{el}}}}}_{_{\tiny{\hbox{free}}}}$ for a given random event $\omega \in \Omega$, and $L^2_{\mathbb{P}} \left(\Omega;\mathcal{H} \right)$ is  the Bochner space given by
\begin{eqnarray*}
L^2_{\mathbb{P}} \left(\Omega;\mathcal{H} \right) \myeq \left\{ (u,\bm{U}): \Omega \rightarrow \mathcal{H} \;\;\;\hbox{s.t.}\;\; \int_{\Omega} \left\|(u(\omega),\bm{U}(\omega))\right\|^2_{\mathcal{H}} d \mathbb{P}(\omega) < \infty \right\}.
\end{eqnarray*}
Figure \ref{fg:FEM} (top) shows both the electric potential field and the current streamlines, while Figure \ref{fg:FEM} (bottom) shows the current streamlines along the four plies of the composite material. Streamlines connect electrodes with prescribed inlet flux and electrodes with prescribed outlet flux. Due to jump discontinuities in the conductivity parameter, the current flux lacks smoothness. A similar behavior occurs at the edges of the electrodes, where there is a sudden transition between the boundary conditions, no flux and current inlet/outlet flux. As for the potential field, since we account for a surface impedance $z_l$ to emulate imperfections when an electrode is attached to the surface of the plate, the potential at the electrodes slightly differs from the potential field at those boundaries.
\begin{figure}[H]
\centering
\includegraphics[width=0.65\textwidth]{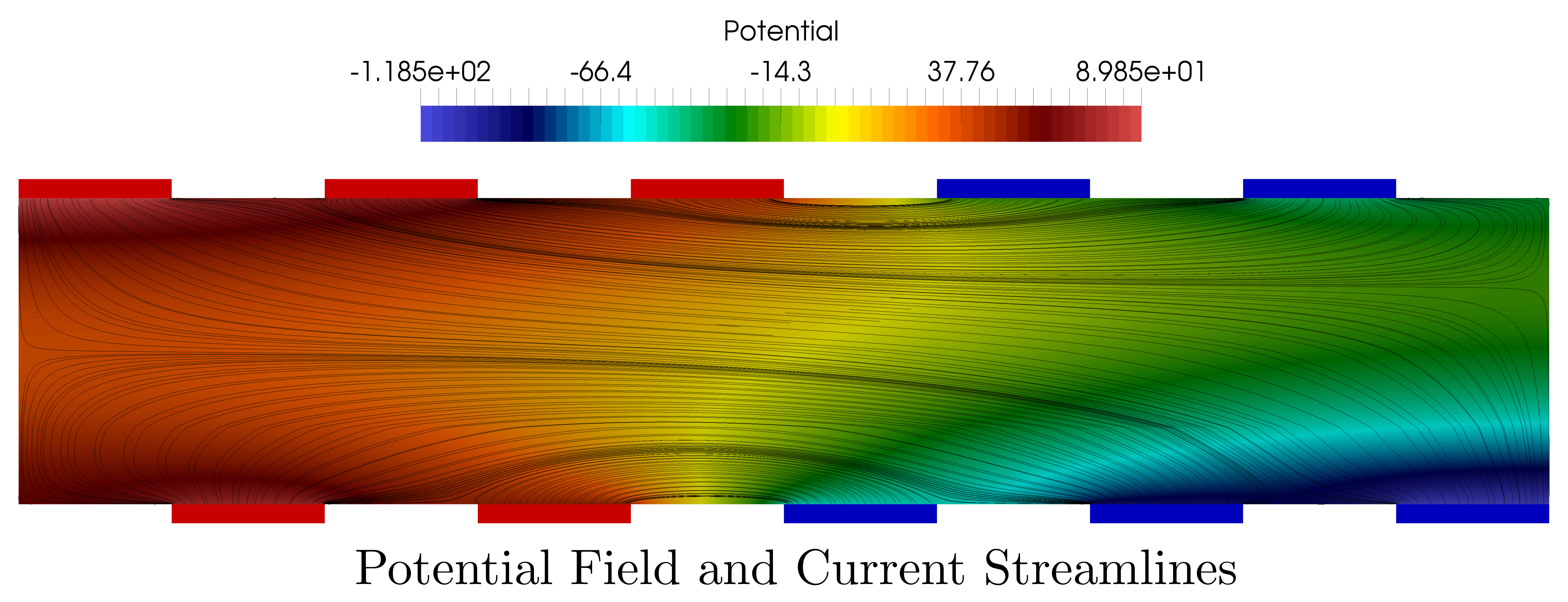}
\includegraphics[width=0.65\textwidth]{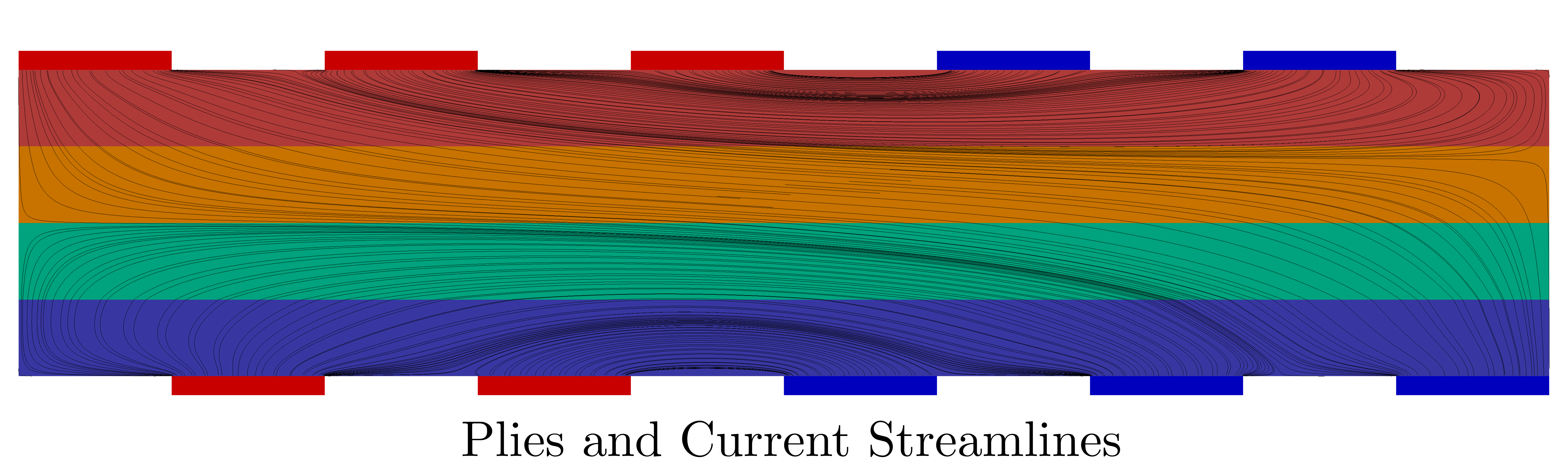}
\caption{(Color online) Finite elements approximation of the potential field (top) and streamlines (bottom). Streamlines depicting the current flux from inlet (red electrodes) to outlet (blue electrodes).}
\label{fg:FEM}
\end{figure}

\subsection{Implementation details of multilevel methods}
By least-squares estimation, we estimate for $\bm{g}_\ell$ the rate of work $\gamma \approx 2$ and the weak error rate $\eta_w \approx 1.5$ in Assumption \ref{assump:g}. The weak convergence of $\bm{g}_{\ell}$ is used as an estimate for the weak convergence of the approximate EIG, $I_{\ell}$ in \eqref{eq:klfl}. We define the sequence for the mesh hierarchy as $h_{\ell}=2^{-\ell}h_0$, $\ell>0$ where $h_0$ is the coarsest mesh-element size. In our numerical experiment, $h_0$ is the mesh size determined by a rectangular mesh ($N_x=10,N_y=4$) on a domain $L_x \times L_y$ ($L_x=20,L_y=4$).

The MLDLMC estimator is given by \eqref{eq:mldl}, and the method parameters are estimated according to the procedure given in Section \ref{sec:mldlmcparam}. The unknowns are the two constants $C_1$ and $C_2$ in the bias constraints and estimates are also required for $V_{\ell}$. Thus, we perform two runs of MLDLMC with $L=5$, $N_{\ell}=5$ for all levels $\ell$ with the identical samples for both runs, and then for one run $M_L=1$ and the other $M_L=10$. By using these initial runs and associated samples of $\bm{g}_{\ell}$ for $\ell=0,\ldots,L$, we can estimate $C_1,C_2,\eta_s$ and $V_{\ell}$. We note that $C_1 \approx 5 \times 10^{-3}$ for our example, and due to $C_1$ being smaller than any $\hbox{TOL}$ to be considered, we set $M_{L}=1$. By construction \eqref{eq:Massump}, we have that $M_{\ell}=1$ for all $\ell$. Given $C_2$, we can estimate $L$ and $\kappa$ from \eqref{eq:Lstar}. The variances $V_{\ell}$ are numerically estimated, and from from that the number of inner samples $N_{\ell}$ is determined from \eqref{eq:Nel}. Since $M_L$ is small, we can resort to a standard MLMC implementation as shown in Section \ref{sec:compdiscuss}. We will use the Continuation Multilevel Monte Carlo (CMLMC) given by Algorithm 2 \cite{key56} as it has the nice feature to adaptively estimate $V_{\ell}$ and accordingly determine $N_{\ell}$ until $\hbox{TOL}$ is satisfied.

The MLDLSC estimator is given by \eqref{eq:mldlsc} and the approach is described in detail in Section \ref{sec:miscis}. We adopt the MISC implementation given by Algorithm 1 of \cite{BTT2018}. The full-tensor approximation of the inner expectation with the change of measure being the Laplace-based importance sampling and we set the level to 0, which is analog to the choice of $M_L=1$ for MLDLMC. The random parameter $\bm{\theta}$ follows a uniform distribution in this case, and, thus, the Clenshaw--Curtis points and weights are used \cite{T2008}.

\subsection{Numerical results}
In this section, we analyze the performance of MLDLMC and MLDLSC for estimating the value of the EIG criterion for a fixed EIT experiment. For both estimators the goal is to achieve a specified error tolerance, $\hbox{TOL}$. As described in Section \ref{sec:mldlmcparam}, since MLDLMC is a random estimator, a relaxed probability constraint is applied, given in \eqref{eq:probframe} with $\alpha=0.05$, which should ensure that the absolute error is below $\hbox{TOL}$ with a $95\%$ probability of success. The performance in terms of computational efficiency and consistency will be investigated over a range of $\hbox{TOL}$.

In Figure \ref{fig:errvstol}, we numerically verify for $\hbox{TOL}$ ranging from $1$ to $0.001$ the consistency between $\hbox{TOL}$ and a computed absolute error using MLDLSC for $\hbox{TOL}^{-3}$. Even though in practice one would only perform a single or a few runs of MLDLMC, here we instead for analysis purposes repeat the random estimator MLDLMC hundred times for each considered $\hbox{TOL}$ using different pseudo-random states. We observe that only $2\%$ of the MLDLMC runs result in relative errors larger than $\hbox{TOL}$, which is consistent with our choice of $95\%$ probability of success. From the results shown in Figure \ref{errvstol.MLSC}, one observes that the estimation error of MLDLSC is consistently below $\hbox{TOL}$, but the error estimates underpinning the adaptivity algorithm of MLDLSC are not sharp.

\begin{figure}[H]
  \centering
  \subfloat[MLDLMC]{\label{errvstol.MLMC}\includegraphics[width=0.5\linewidth]{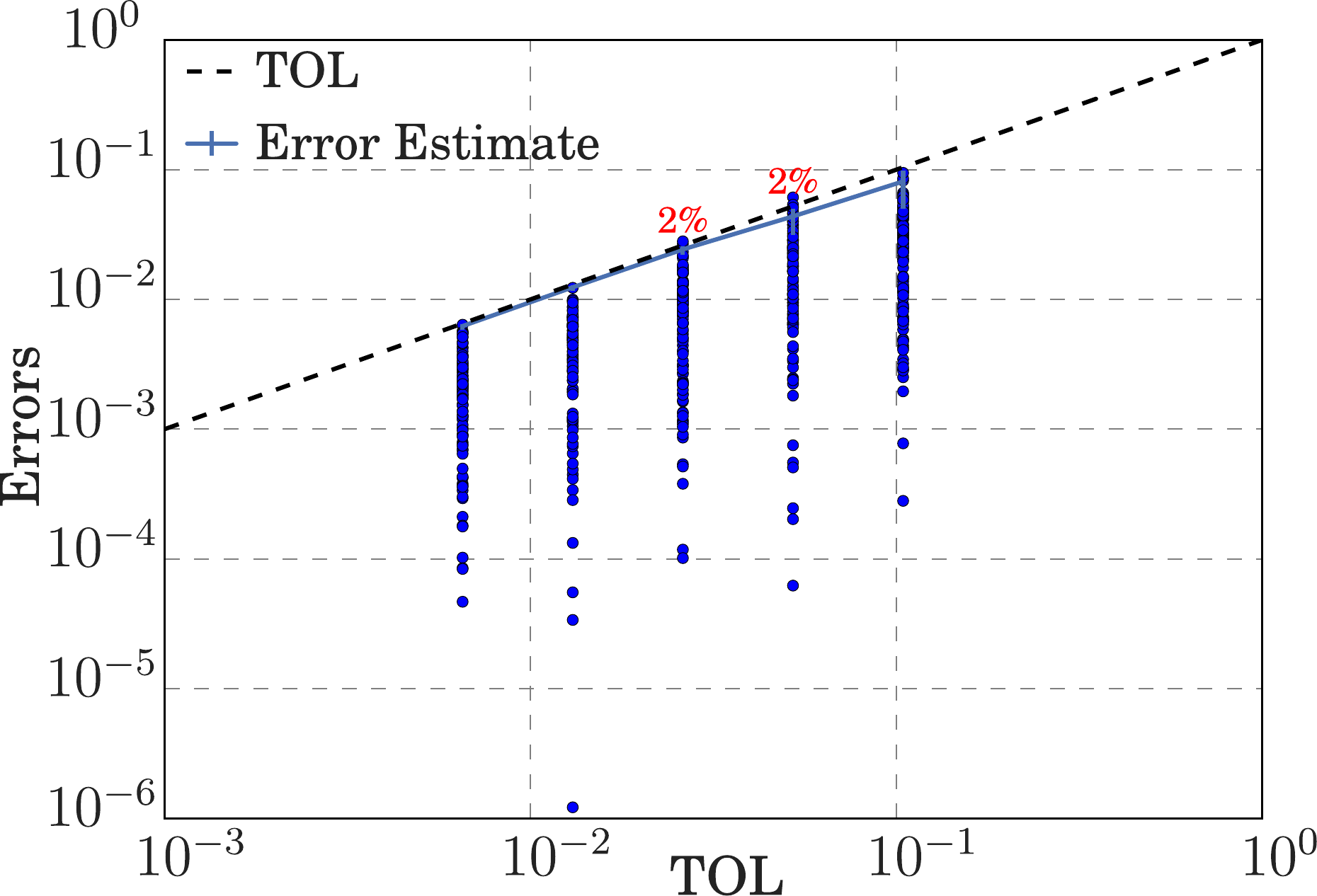}}
  \subfloat[MLDLSC]{\label{errvstol.MLSC}\includegraphics[width=0.5\linewidth]{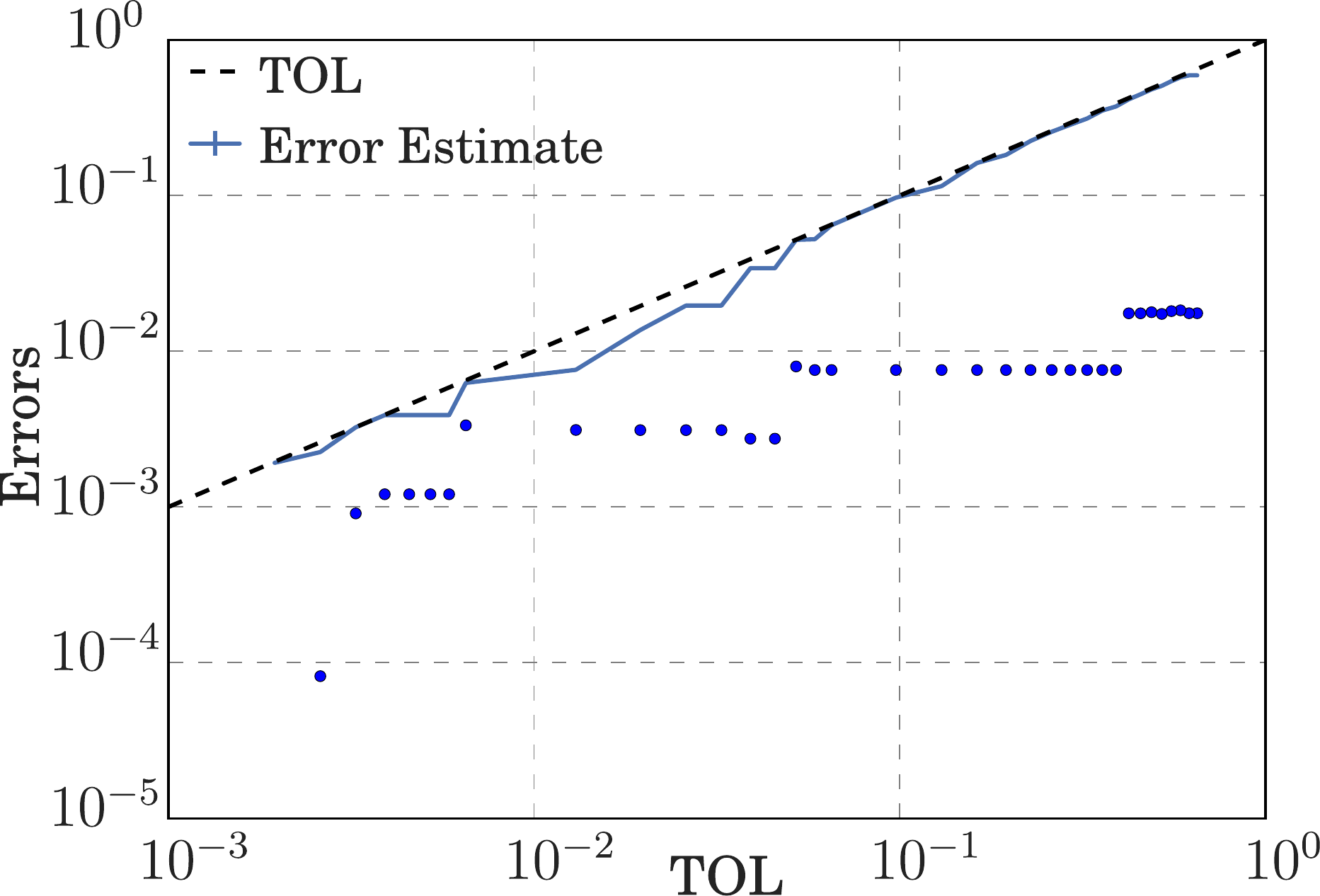}}
  \caption{Relative absolute error \emph{versus} error tolerance \hbox{TOL} for MLDLMC (left) and MLDLSC (right). The blue dots are individual runs. For each considered tolerance $\hbox{TOL}$, we perform 100 runs of the random estimator MLDLMC.}
  \label{fig:errvstol}
\end{figure}

We observe that the sample mean (Fig. \ref{fg:El_MLMC}) and sample variance (Fig. \ref{fg:Vl_MLMC}) of the telescopic differences in MLDLMC with respect to level $\ell$ decay at rates roughly equal to the assumed asymptotic rates $\eta_w=1$ and $2\eta_s=2$, respectively. These plots are used to show the consistency of the discretization method in terms of the weak and strong convergences stated in Assumption \ref{assump:g}.

\begin{figure}[H]
  \centering
  \subfloat[Weak convergence]{\label{fg:El_MLMC}\includegraphics[width=0.45\linewidth]{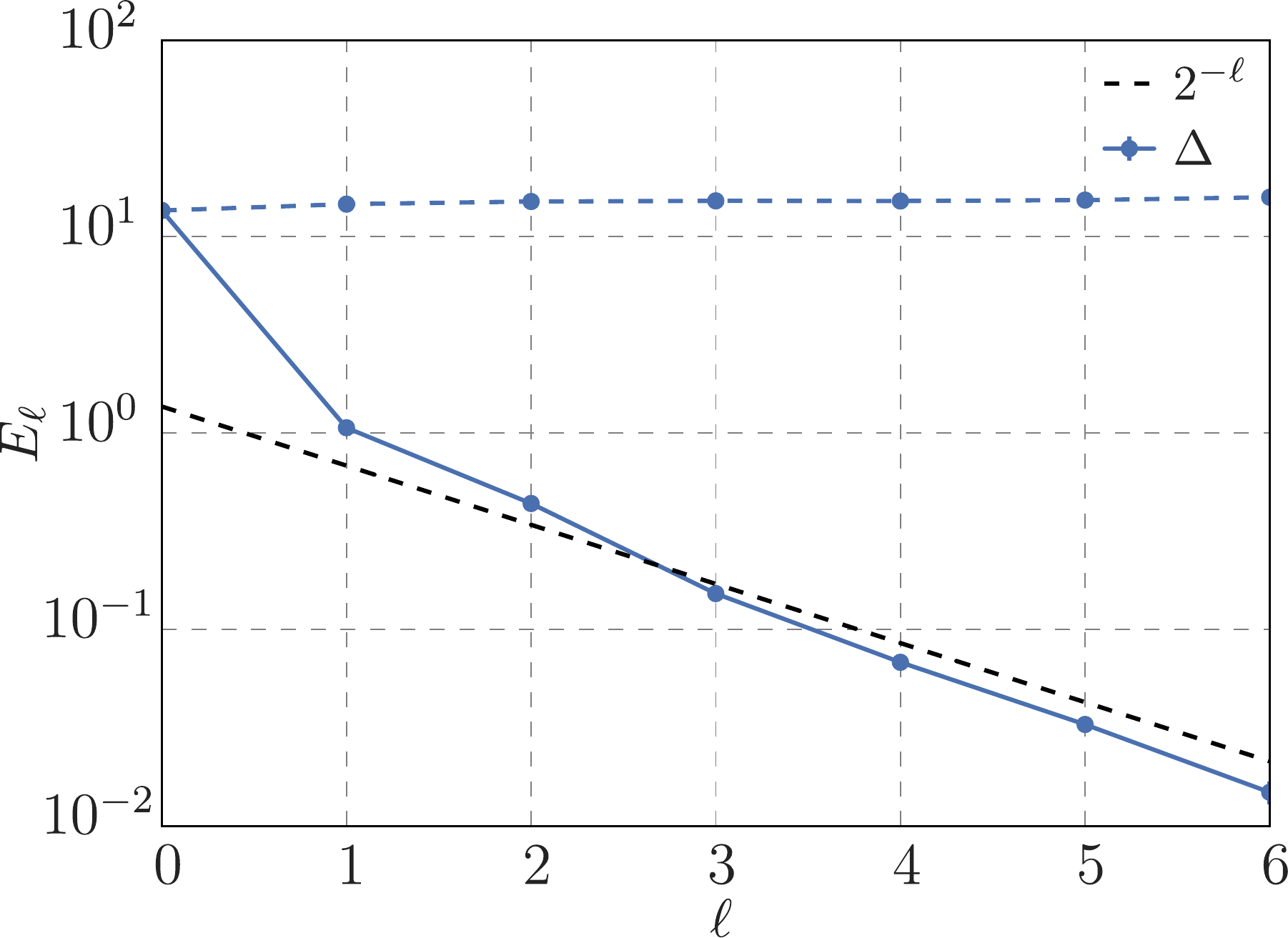}}
  \subfloat[Strong convergence]{\label{fg:Vl_MLMC}\includegraphics[width=0.45\linewidth]{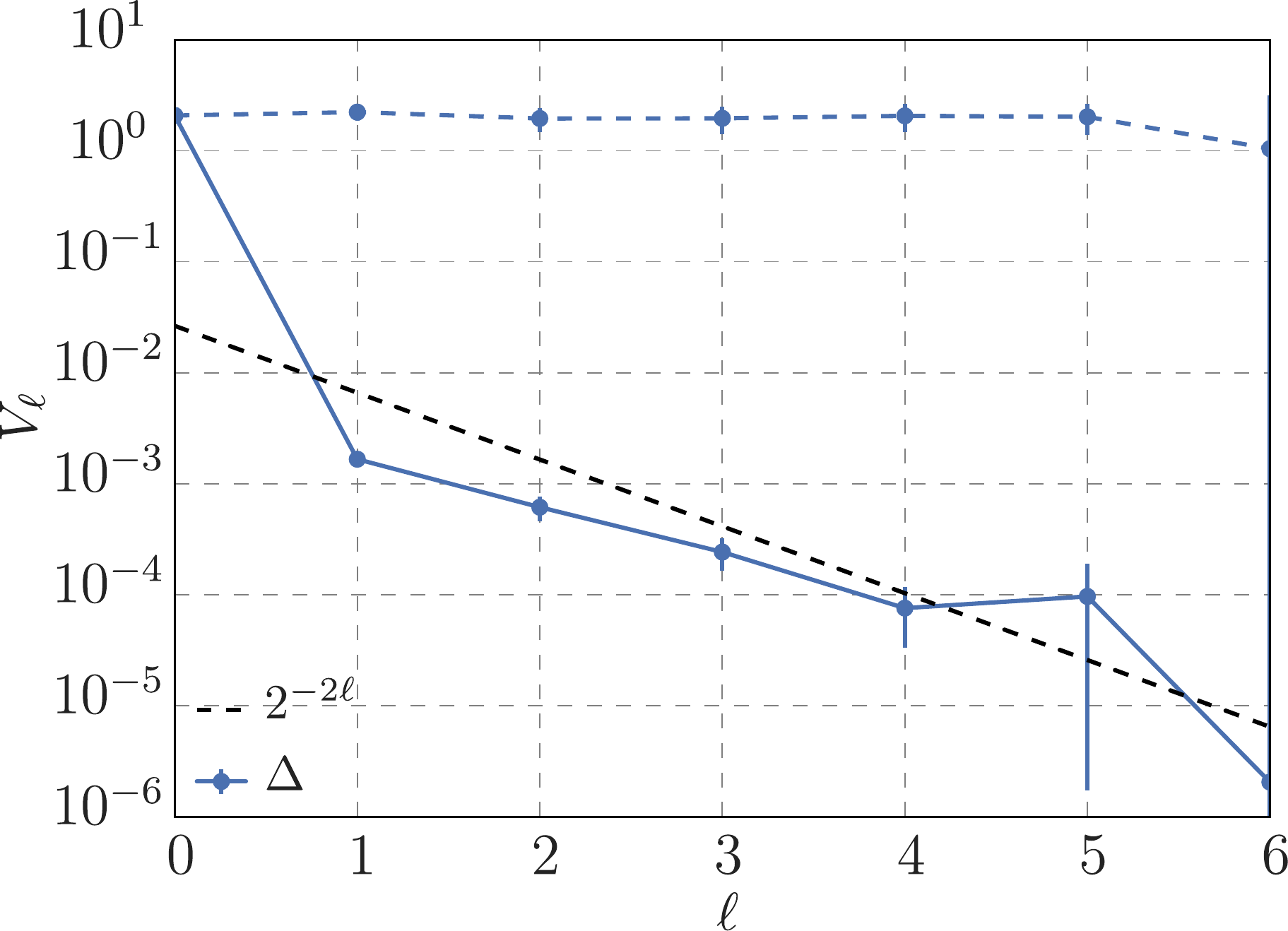}}
  \caption{$E_\ell$ and $V_\ell$ with respect to level $\ell$ for MLDLMC.}
  \label{fg:El_Vl_MLMC}
\end{figure}

\begin{figure}[H]
\centering
\includegraphics[width=0.5\textwidth]{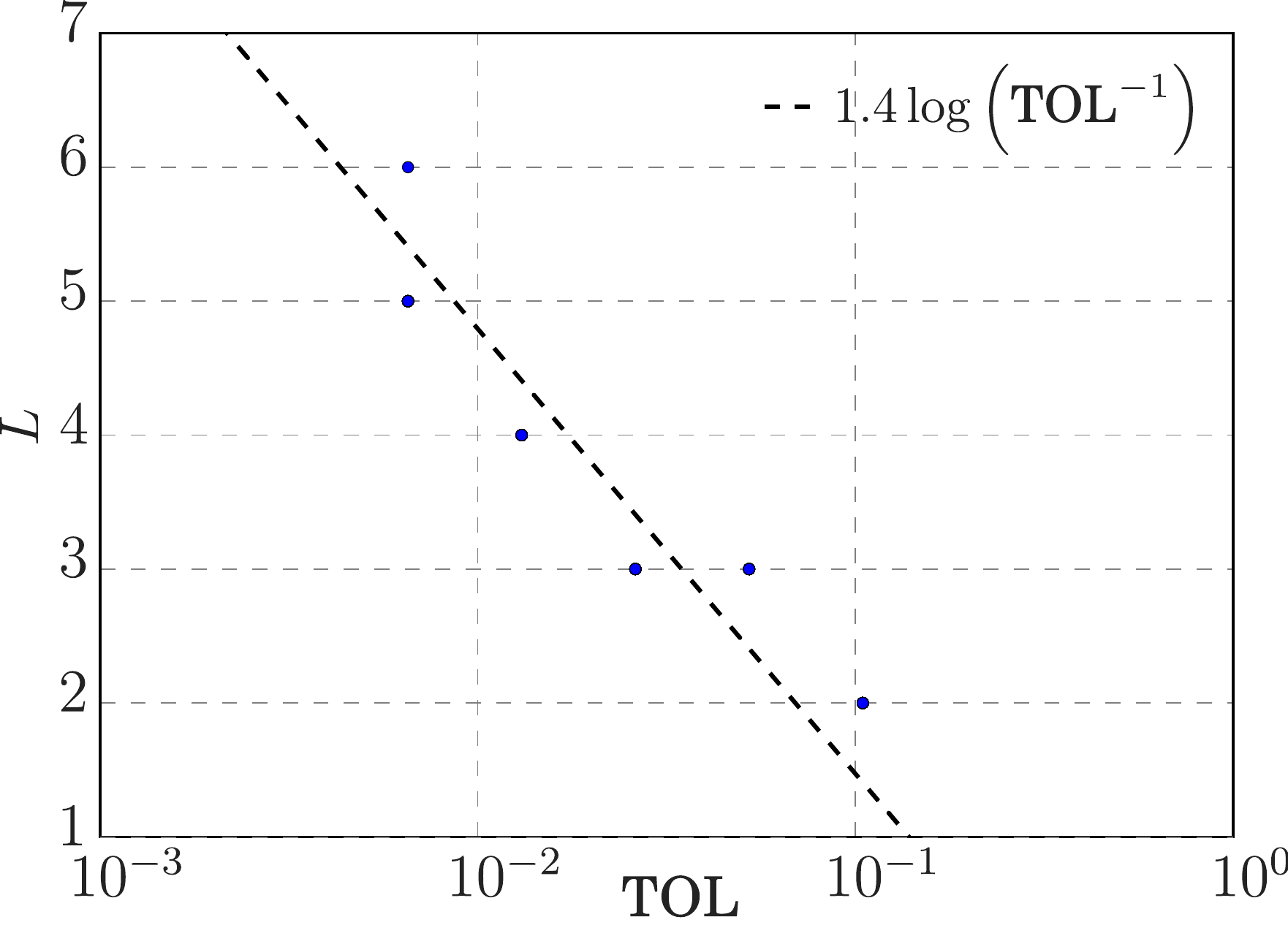}
\caption{Level $L$ of the MLDLMC with respect to $\hbox{TOL}$.}
\label{fg:LvsTOL}
\end{figure}

The finest mesh level $L$ considered in the MLDLMC method for the different choices of $\hbox{TOL}$ is shown in Figure \ref{fg:LvsTOL}. The finest level $L=6$ uses a mesh with $N_x=640$ and $N_y=256$. We observe that level $L$ follows approximately $L \approx 1.4\log\left( \hbox{TOL}^{-1} \right)$ in agreement with the asymptotic behavior of $L$ with respect to \hbox{TOL}, see \eqref{eq:Lstar}. In Figure \ref{fg:timevstol}, the computational times of MLDLSC and MLDLMC, with 100 repeated runs, are compared for a range of $\hbox{TOL}$. An estimate of the average computational time for the DLMCIS estimator \cite{BDELT2018} is also included to demonstrate the computational efficiency of the two proposed multilevel methods.

\begin{figure}[H]
\centering
\includegraphics[width=0.68\textwidth]{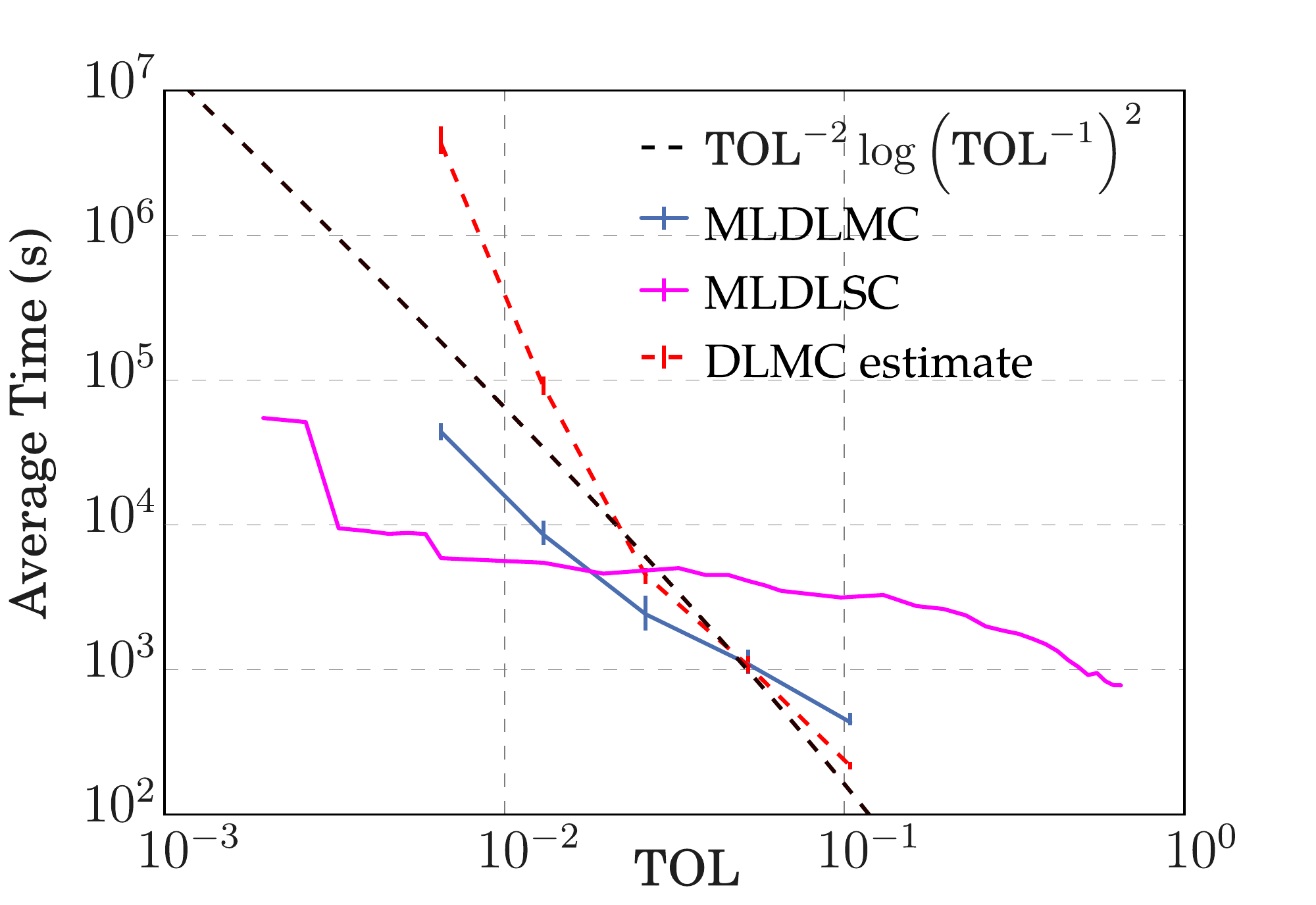}
\caption{Computational time for MLDLMC and MLDLSC, and the estimated time for DLMCIS.}
\label{fg:timevstol}
\end{figure}

MLDLSC exploits the regularity of the underlying model with respect to the uncertain parameter $\bm{\theta}$ and thus achieves a better work complexity than MLDLMC. Conversely, it should be mentioned that MLDLMC is expected to perform better when regularity is low. MLDLSC seems to perform worse than expected for large $\hbox{TOL}$, which could be attributed to the error estimates not being sharp. As shown in Section \ref{sec:compdiscuss}, the work complexity of DLMCIS follows that of the standard MLMC, in this case $\hbox{TOL}^{-2}\log\left( \hbox{TOL}^{-1} \right)^2$; see Theorem 2.1 \cite{G2015} with $\beta=\gamma$ where in that theorem $\beta \myeq 2\eta_s$. MLDLSC and MLDLMC are superior to DLMCIS, which shows that the multilevel construction greatly accelerates the computational performance.

\section{Conclusion}
In the situation when the experiments are modeled by PDEs, and under certain mild conditions on the underlying computational model, we propose two multilevel methods for computationally efficient estimation of the EIG criterion in the context of Bayesian optimal experimental design. The first method is the MLDLMC estimator, which is a multilevel extension of the DLMCIS method, and the second is the MLDLSC method, which uses an adaptive sparse-grid stochastic collocation scheme, and given enough regularity with respect to the random parameter could in many situations achieve a higher accuracy than MLDLMC at a lower computational cost. We found that the computational efficiency of using the multilevel construction relies strongly on keeping the number of inner samples to be low, which in the proposed methods are achieved by the Laplace-based importance sampling. In Bayesian optimal experimental design, the relative difference in EIG values between designs is of interest, and within the likely range of accuracies needed for such an estimation, the work complexity of MLDLMC is the same as for the standard MLMC.

We have demonstrated that the proposed multilevel estimators for the EIG criterion are computationally efficient, as they can balance the work over a hierarchy of meshes, and robust with reliable error control, i.e., to achieve a specified error tolerance with a high probability of success.

\section*{Acknowledgements}
The research reported in this publication was supported by funding from King Abdullah University of Science and Technology (KAUST) Office of Sponsored Research (OSR) under award numbers URF/1/2281-01-01 and URF/1/2584-01-01 in the KAUST Competitive Research Grants Program-Round 3 and 4, respectively, and the Alexander von Humboldt Foundation. J. Beck, L.F.R. Espath, and R. Tempone are members of the KAUST SRI Center for Uncertainty Quantification in Computational Science and Engineering.

\bibliographystyle{acm}
\bibliography{reference}

\end{document}